\newtheorem{lemma}{Lemma}
\newtheorem{theorem}{Theorem}
\newtheorem{corollary}{Corollary}
\theoremstyle{remark}
\newtheorem{remark}{Remark}
\newcommand{\dist}{\operatorname{dist}}
\newcommand{\diam}{\operatorname{diam}}
\newcommand{\olim}{\operatornamewithlimits{\overline{lim}}}
\begin{document}

\title{\textbf{A Resonance Theorem\\ for a Family of Translation Invariant Differentiation Bases}}

\author{\textbf{Giorgi G. Oniani}}

\date{\fontsize{10}{12pt}\selectfont
    \begin{center}
    \textit{Department of Mathematics, Akaki Tsereteli State University, \\ 59 Tamar Mepe St., Kutaisi 4600, Georgia} \\
    \textit{E-mail: \texttt{oniani@atsu.edu.ge}}
\end{center}}
\maketitle

\footnotetext{2010 \textit{Mathematical Subject Classification.} 28A15.}

\footnotetext{\textit{Key words and phrases.} Differentiation basis, translation invariant basis, integral, rotation.}

\begin{abstract}
A resonance theorem providing existence of functions that are counterexamples for all members of a given family of translation invariant differentiation bases is proved. Applications of the theorem to Zygmund problem on a choice of coordinate axes are given.
\end{abstract}

\bigskip
\bigskip

\section{Definitions and notation}
\label{sec:1}

A mapping $B$ defined on $\mathbb{R}^n$ is said to be a \emph{differen\-tiation basis} if for every $x\in \mathbb{R}^n$, $B(x)$ is a family  of bounded measurable sets with positive measure and containing $x$, such that there exists a sequence $R_k\in B(x)$ $(k\in\mathbb{N})$ with $\lim\limits_{k\rightarrow\infty}\diam R_k=0$.

For $f\in L(\mathbb{R}^n)$, the numbers
$$  \overline{D}_B\Big(\int f,x\Big)=\mathop{\overline{\lim}}\limits_{\substack{R\in B(x) \\ \diam R\to 0}}
                    \frac{1}{|R|} \int\limits_R f \;\; \text{and} \;\;
        \underline{D}{\,}_B\Big(\int f,x\Big)=
                \mathop{\underline{\lim}}\limits_{\substack{R\in B(x) \\ \diam R\to 0}} \frac{1}{|R|} \int\limits_R f $$
are called \emph{the upper and the lower derivative,} respectively, \emph{of the integral of $f$ at a point $x$}. If the upper and the lower derivative coincide, then their combined value is called the \emph{derivative of $\int f$ at a point $x$} and denoted by $D_B(\int f,x)$. We say that the\emph{ basis  $B$ differentiates} $\int f$ (or $\int f$ is differentiable with respect to $B$) if $\overline{D}_B(\int f,x)=\underline{D}{\,}_B(\int f,x)=f(x)$ for almost all $x\in \mathbb{R}^n$. If this is true for each $f$ in the class of functions $X$ we say that $B$ differentiates~$X$.

The \emph{maximal operator $M_B$} and  \emph{truncated maximal operator $M_B^{r}$ $(r>0)$ corresponding to a basis }$B$ are  defined as follows:
\begin{align*}
    M_B(f)(x) & =\sup_{R\in B(x)} \frac{1}{|R|} \int\limits_R |f|, \\
    M_B^r(f)(x) & =\sup_{\substack{R\in B(x) \\\diam R<r}}\frac{1}{|R|}\int\limits_R |f|,
\end{align*}
where $f\in L_{loc}(\mathbb{R}^n)$ and $x\in\mathbb{R}^n$.

By $\textbf{I}_n^{k}$ $(2\leq k\leq n)$ we will denote the basis such that $\textbf{I}_n^{k}(x)$ $(x\in \mathbb{R}^n)$ consists of all $n$-dimensional intervals  lengthes  of which edges take not more then $k$ different values and which contain $x$. The basis $\textbf{I}_n^{n}$ will be  denoted by $\textbf{I}$. The differentiation with respect to $\textbf{I}$ is called \emph{strong differentiation}.

A basis $B$ is called:
\begin{itemize}
\item \emph{translation invariant}(briefly, ${\rm TI}$-basis) if $B(x)=\{x+I: I\in B(0)\}$ for every $x\in\mathbb{R}^n$;

\item \emph{homothecy invariant}(briefly, ${\rm HI}$-basis) if for every $x\in\mathbb{R}^n, R\in B(x)$ and a homothecy $H$ with the centre at $x$ we have that $H(R)\in B(x)$;

\item \emph{formed of sets from the class} $\Delta$ if $\overline{B}\subset \Delta$.

\item \emph{convex} if it is formed of the class of all convex sets.
\end{itemize}

Denote by $\Gamma_n$ the family of all rotations in the space $\mathbb{R}^n$.

Let $B$ be a basis in $\mathbb{R}^n$ and $\gamma\in \Gamma_n$.  The $\gamma$-\emph{rotated basis} $B$ is defined as follows
$$  B(\gamma)(x)=\big\{x+\gamma(I - x):\;I\in B(x)\big\} \;\; (x\in\mathbb{R}^n).       $$

For  an increasing  function $\Phi: (0,\infty)\to (0,\infty)$ and a measurable set $E\subset\mathbb{R}^n$ by $\Phi(L)(E)$($[\Phi(L)](E)$) we denote the class of all measurable functions $f:\mathbb{R}^n\to \mathbb{R}$ such that $\{f\neq 0\}\subset E$ and $\int\limits_{\{f\neq 0\}}\Phi(|f|)<\infty$ ($\int\limits_{\{f\neq 0\}}\Phi(|f|/h)<\infty$ for some $h\geq 1$).

A function $\Phi: (0,\infty)\rightarrow (0,\infty)$ is said to satisfy $\Delta_2$-\emph{condition at infinity} if there are $c>0$ and $\tau> 0$ such that $\Phi(2t)\leq c\Phi(t)$ for every $t> \tau$.

We say that an increasing function $\Phi: (0,\infty)\rightarrow (0,\infty)$ is \emph{non-regular} if $\mathop{\overline{\lim}}\limits_{t\rightarrow\infty} \frac{\Phi(t)}{t}=\infty. $

The unit cube $(0,1)^n$ will be denoted by $\mathbb{G}^n$.

Let $\Phi: (0,\infty)\rightarrow (0,\infty)$ be an increasing function. It is easy to check that:
\begin{itemize}
\item[1)] If $\Phi$ satisfies $\Delta_2$-condition at infinity, then $[\Phi(L)](\mathbb{G}^n)=\Phi(L)(\mathbb{G}^n)$;

\item[2)] The class $L\setminus [\Phi(L)](\mathbb{G}^n)$ is non-empty if and only if $\Phi$ is non-regular.
\end{itemize}

The family of all diadic intervals of order $m\in\mathbb{Z}^n$ we will denote by $W_m$, i.e.,
$$  W_m=\bigg\{\underset{j=1}{\overset{n}{\times}} \Big(\frac{k_j}{2^{m_j}}\,,\frac{k_j+1}{2^{m_j}}\Big):\;k_1,\dots,k_n \in\mathbb{Z}^n\bigg\}.  $$
By $H_m$ ($m\in\mathbb{Z}^n$)  it will be denoted the family of all possible  unions of intervals from $W_m$.

 Below everywhere it will be assumed that the dimension $n$ is greater then~$1$.

\section{Main result}
\label{sec:2}

Saks \cite{1} and Busemann and Feller \cite{2} constructed a function $f\in L(\mathbb{R}^n)$ whose integral is not strongly differentiable.

Zygmund \cite[p.~99]{3} posed the problem: \emph{Is it possible for arbitrary function $f\in L(\mathbb{G}^2)$  to choose a rotation $\gamma\in\Gamma_2$ so that}  $\textbf{I}(\gamma)$ \emph{differentiates} $\int f$?

Marstrand \cite{4} gave a negative answer to the problem, namely, constructed a non-negative function $f\in L(\mathbb{G}^2)$ such that for every $\gamma\in\Gamma_2$,
$$  \overline{D}_{\textbf{I}(\gamma)}\Big(\int f,x\Big)=\infty \;\;\text{almost everywhere on $\mathbb{G}^2$}.      $$

Developing Marstrand approach below we will prove a resonance  theorem providing existence of functions that are counterexamples for all members of a given family of translation invariant bases. Applications of the theorem to Zygmund problem are given also.

The work is a revised version of \S~II.1 from the monograph \cite{5}.

Let $\Lambda$ be a non-empty family of translation invariant differentiation bases in $\mathbb{R}^n$ and let $\Phi: (0,\infty)\to (0,\infty)$ be a some function. We will say that $\Lambda$ has $M_\Phi$-\emph{property} if for every $h>1$ and $\varepsilon>0$ there exist a set $E\subset\mathbb{R}^n$ of positive measure, sets $P_B$ $(B\in\Lambda)$ and an interval $Q$ such that:
\begin{itemize}
\item[1)] $P_B\subset\{M_B^{(\varepsilon)}(h\chi_{{}_E})>1\}$ $(B\in\Lambda)$;

\item[2)] $\{P_B:\;B\in\Lambda\}\subset H_m$ for some $m\in\mathbb{N}^n$;

\item[3)] $|P_B|\geq c\Phi(h)|E|$ $(B\in\Lambda)$;

\item[4)] $E\subset Q$ and $P_B\subset Q$ $(B\in\Lambda)$;

\item[5)] $\diam Q<\varepsilon$;

\item[6)] $|E|\geq c(h)|Q|$,
\end{itemize}
where $c>0$, $c$ does not depend on $h$ and $\varepsilon$, $c(h)\in(0,1)$ and $c(h)$ does not depend on $\varepsilon$.

For a non-empty family of differentiation bases $\Lambda$ by $S_\Lambda$ denote the class of all functions $f\in L(\mathbb{G}^n)$ for which
$$  \overline{D}_B\Big(\int f,x\Big)=\infty \;\;\text{almost everywhere on $\mathbb{G}^n$}  $$
for every $B\in\Lambda$.

\begin{theorem}\label{th:1}
Let $\Lambda$ be a non-empty family of translation invariant differentiation bases in $\mathbb{R}^n$ and let $\Phi: (0,\infty)\rightarrow (0,\infty)$ be a non-regular increasing function.  If $\Lambda$ has $M_\Phi$-property, then for every  $f\in L\setminus [\Phi(L)](\mathbb{G}^n)$   there exists a measure preserving and invertible mapping $\omega:\mathbb{R}^n\to\mathbb{R}^n$ such that  $\{x:\omega(x)\neq x\}\subset \mathbb{G}^n$ and $|f|\circ\omega\in S_\Lambda$. In particular, if $\Phi$ additionally satisfies $\Delta_2$-condition at infinity, then the same conclusion is valid for every $f\in L\setminus \Phi(L)(\mathbb{G}^n)$.
\end{theorem}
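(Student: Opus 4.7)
My plan is to construct $\omega$ as an invertible measure preserving map of $\mathbb{G}^n$ (extended by the identity on $\mathbb{R}^n\setminus\mathbb{G}^n$) which rearranges the level sets of $|f|$ so as to realise, at arbitrarily small scales inside $\mathbb{G}^n$, the resonance configurations furnished by the $M_\Phi$-property. For suitable sequences $h_k\to\infty$, $\eta_k\to\infty$ with $\eta_k/h_k\to\infty$, and $\varepsilon_k\downarrow 0$, I invoke the $M_\Phi$-property with parameters $(h_k,\varepsilon_k)$ to obtain an interval $Q_k$, a set $E_k\subset Q_k$, and sets $P_B^{(k)}\subset Q_k$ ($B\in\Lambda$) satisfying properties 1--6. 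Setting $S_0:=\mathbb{G}^n$ and proceeding inductively, I use translation invariance of every $B\in\Lambda$ to tile a portion of $S_{k-1}$ with $N_k$ disjoint translates of $Q_k$, producing translated configurations $E_k^{(j)}$ and $P_B^{(k,j)}$; writing $S_k:=\bigcup_j E_k^{(j)}$ and $T_k^B:=\bigcup_j P_B^{(k,j)}$ then gives a nested sequence $S_1\supset S_2\supset\cdots$ with $|S_k|\leq|\{|f|\geq\eta_k\}\cap\mathbb{G}^n|$ by the choice of $N_k$.

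The nested tower $(S_k)$ in $\mathbb{G}^n$ and the decreasing tower $(\{|f|\geq\eta_k\}\cap\mathbb{G}^n)$, of compatible measures, allow a standard measure-theoretic construction to produce an invertible measure preserving $\omega:\mathbb{G}^n\to\mathbb{G}^n$ with $\omega(S_k)\subset\{|f|\geq\eta_k\}\cap\mathbb{G}^n$ for every $k$; equivalently, $|f|\circ\omega\geq\eta_k$ on $S_k$. After extending $\omega$ by the identity to $\mathbb{R}^n$, the pointwise resonance estimate runs as follows: for $x\in T_k^B$, property 1 applied to the translated configuration (valid by translation invariance of $B$) produces $R\in B(x)$ with $\diam R<\varepsilon_k$ and $|R\cap S_k|/|R|>1/h_k$; hence $\frac{1}{|R|}\int_R|f|\circ\omega\geq\eta_k\,|R\cap S_k|/|R|>\eta_k/h_k$, and the right hand side tends to infinity by the choice of sequences.

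The decisive technical step is to choose $(h_k,\eta_k)$ so that the sets $T_k^B$ fill $\mathbb{G}^n$ with a summable error, simultaneously for every $B\in\Lambda$; a Borel--Cantelli argument on the complements then yields $\overline{D}_B(\int|f|\circ\omega,x)=\infty$ almost everywhere on $\mathbb{G}^n$ for each $B$. Combining properties 3 and 6 with the tiling gives $|T_k^B|\geq c\,\Phi(h_k)|S_k|$, so, writing $\lambda_f(t):=|\{|f|>t\}\cap\mathbb{G}^n|$, the question reduces to finding sequences with both $\eta_k/h_k\to\infty$ and $\Phi(h_k)\lambda_f(\eta_k)\to\infty$. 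I expect this balancing to be the main obstacle: one must unpack $f\in L\setminus[\Phi(L)](\mathbb{G}^n)$, equivalently $\int_{\mathbb{G}^n}\Phi(|f|/h_0)=\infty$ for every $h_0\geq 1$, by a layer-cake identity and combine the resulting divergence with the non-regularity $\limsup_{t\to\infty}\Phi(t)/t=\infty$ to pick $h_k$ and $\eta_k$ along subsequences which realise both requirements. The $\Delta_2$ addendum then follows immediately from the first easy observation recorded in Section~\ref{sec:1}, which identifies $[\Phi(L)](\mathbb{G}^n)$ with $\Phi(L)(\mathbb{G}^n)$ under $\Delta_2$.
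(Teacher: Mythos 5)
Your overall strategy (periodize the $M_\Phi$-configurations at small scales, rearrange $|f|$ by a measure preserving map, conclude by Borel--Cantelli) is in the right spirit, but two of your steps fail as stated. First, the nested tower: you cannot in general ``tile a portion of $S_{k-1}$ with translates of $Q_k$'', because $S_{k-1}$ is a union of translates of the measurable set $E_{k-1}$ coming from the $M_\Phi$-property and need not contain any translate of an interval. Worse, even granting such a tiling, nestedness forces $T_k^B\subset\bigcup_j Q_k^{(j)}\subset S_{k-1}$, and $|S_{k-1}|\le\lambda_f(\eta_{k-1})\le\|f\|_{L^1}/\eta_{k-1}\to0$; hence $\limsup_{k}T_k^B\subset\bigcap_m S_m$ is a null set, so your construction can exhibit divergence only on a set of measure zero, the opposite of what is required. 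Relatedly, the ``fill $\mathbb{G}^n$ with a summable error'' Borel--Cantelli on complements needs $|T_k^B|$ close to $1$, which the $M_\Phi$-property never supplies: it only gives density $|P_B|\ge c\Phi(h)|E|$ with $|E|\ge c(h)|Q|$, and $c(h)$ may be tiny. The paper's mechanism is different: at every step $k$ the configuration is copied into \emph{all} dyadic cubes of $\mathbb{G}^n$ of a suitable order (Lemma \ref{lem:1}), yielding $|P_{B,k}\cap Q|=|P_{B,k}|\,|Q|$ on a coarser dyadic grid; with the grids refining, Lemmas \ref{lem:5}--\ref{lem:6} make $(P_{B,k})_k$ a sequence of \emph{independent} sets, and the second Borel--Cantelli lemma then needs only $\sum_k|P_{B,k}|=\infty$, i.e.\ small densities whose sum diverges, to get $|\limsup_k P_{B,k}|=1$.

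Second, the step you defer as ``the main obstacle'' is in fact impossible in general: there are no sequences with $\eta_k/h_k\to\infty$ and $\Phi(h_k)\lambda_f(\eta_k)\to\infty$ for every admissible $f$. Take $\Phi(t)=t\log(e+t)$ (increasing, non-regular, $\Delta_2$) and $f$ on $\mathbb{G}^n$ with distribution $\lambda_f(\eta)\approx\eta^{-1}(\log\eta)^{-2}$ for large $\eta$: then $f\in L\setminus\Phi(L)(\mathbb{G}^n)$, yet for $\eta\ge h$ one has $\Phi(h)\lambda_f(\eta)\le\Phi(\eta)\lambda_f(\eta)\approx1/\log\eta\to0$, so no such balancing exists. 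What is true, and what the paper extracts from $f\notin[\Phi(L)](\mathbb{G}^n)$ (Lemmas \ref{lem:2}--\ref{lem:4}), is weaker but sufficient: there are pairwise \emph{disjoint} sets $A_k$ with $q_k<h_k\le|f|$ on $A_k$, $q_k\to\infty$, and the \emph{divergent sum} $\sum_k\Phi(h_k/q_k)|A_k|=\infty$; single terms need not blow up. This sum-divergence feeds the independence/Borel--Cantelli argument above, and the disjointness of the $A_k$ together with $|E_k|\le|A_k|$ is exactly what allows the measure preserving map $\omega$ to be built (via Theorem A) so that $|f|\circ\omega\ge g=\sup_k h_k\chi_{{}_{E_k}}$ with $g\in L(\mathbb{G}^n)$ --- replacing your two-tower matching, which presupposed the flawed nested structure. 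Your closing remark on the $\Delta_2$ addendum is correct and agrees with the paper.
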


\section{Auxiliary propositions}
\label{sec:3}

For $m\in\mathbb{N}^n$ denote
$$  W_m^{*}=\big\{E\in W_m:\;E\subset \mathbb{G}^n\big\}, \quad H_m^{*}=\big\{E\in H_m:\;E\subset \mathbb{G}^n\big\}.       $$

\begin{lemma}\label{lem:1}
Let $\Lambda$ be a non-empty family of translation invariant differentiation bases in $\mathbb{R}^n$ and $\Phi: (0,\infty)\rightarrow (0,\infty)$  be a some function. If $\Lambda$ has $M_\Phi$-\emph{property}, then for every $h>1$, $\varepsilon>0$, $m\in\mathbb{N}^n$ and $\delta$ with $0<\delta<c(h)$ there exist a set $E\subset \mathbb{G}^n$ and a family of sets $\{P_B:B\in\Lambda\}$ such that:
\begin{itemize}
\item[$1)$] $\delta/4^n\leq|E|\leq\delta$,

\item[$2)$] $P_B\subset\{M_B^{(\varepsilon)}(h\chi_{{}_E})>1\}$ $(B\in\Lambda)$,

\item[$3)$] $\{P_B:\;B\in\Lambda\}\subset H_j^{*}$ for some $j\in\mathbb{N}^n$ with $j\geq m$,

\item[$4)$] $|P_B|\geq c\Phi(h)|E|$ $(B\in\Lambda)$,

\item[$5)$] $|P_B\cap Q|=|P_B|\,|Q|$ $(Q\in W_m^{*}$, $B\in\Lambda)$.
\end{itemize}
\end{lemma}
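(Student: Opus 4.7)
The strategy is to construct $E$ and the sets $P_B$ as disjoint unions of many translates of a single small base configuration supplied by the $M_\Phi$-property, placing exactly the same number $k$ of translates inside each cube $Q\in W_m^*$. The uniform placement automatically delivers condition 5), translation invariance of each $B\in\Lambda$ transports the maximal-function inequality from the base to every translate, and the total measure of $E$ is tuned to the target range $[\delta/4^n,\delta]$ via the choice of $k$.

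First I would apply the $M_\Phi$-property to $(h,\tilde\varepsilon)$, for a small $\tilde\varepsilon\leq\varepsilon$ to be specified, producing a base configuration: a set $E_0$, sets $\{P_{B,0}\}_{B\in\Lambda}\subset H_{m_0}$ for some $m_0\in\mathbb{N}^n$, and a bounding interval $Q_0$ with $\diam Q_0<\tilde\varepsilon$, $|E_0|\geq c(h)|Q_0|$, and $|P_{B,0}|\geq c\Phi(h)|E_0|$. Since $H_{m_0}\subset H_j$ whenever $j\geq m_0$ coordinatewise, I fix $j\geq\max(m,m_0)$; this $j$ will serve as the index in condition 3). I would also require $\tilde\varepsilon<2^{-\max_l m_l}$ (so that $Q_0$ fits inside each cube of $W_m^*$) and $2^{m_1+\cdots+m_n}\tilde\varepsilon^n<\delta(1-4^{-n})$ (needed for the lower bound on $|E|$). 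Then, for each $Q\in W_m^*$, I would choose $k$ translation vectors $t^{(Q,1)},\dots,t^{(Q,k)}$ that are dyadic of order $j$, whose relative positions within $Q$ are the same for every $Q$, and such that the bounding boxes $Q_0+t^{(Q,i)}$ are pairwise disjoint subsets of $Q$. The common relative placement is compatible with dyadicity of order $j$ because the corner of every $Q\in W_m^*$ is itself dyadic of order $m\leq j$.

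Setting $E=\bigsqcup_{Q,i}(E_0+t^{(Q,i)})$ and $P_B=\bigsqcup_{Q,i}(P_{B,0}+t^{(Q,i)})$, condition 3) holds since each $P_{B,0}+t^{(Q,i)}\in H_j$ and the whole union lies in $\mathbb{G}^n$; condition 2) follows from translation invariance of $B$ together with the monotonicity $M_B^{(\tilde\varepsilon)}\leq M_B^{(\varepsilon)}$ and the pointwise bound $\chi_{E_0+t^{(Q,i)}}\leq\chi_E$; condition 4) from the identities $|P_B|=k\cdot 2^{m_1+\cdots+m_n}|P_{B,0}|$ and $|E|=k\cdot 2^{m_1+\cdots+m_n}|E_0|$; and condition 5) from $|P_B\cap Q|=k|P_{B,0}|$ being independent of $Q\in W_m^*$. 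Taking $k=\lfloor\delta/(2^{m_1+\cdots+m_n}|E_0|)\rfloor$ gives $|E|\leq\delta$ automatically, and the smallness condition on $\tilde\varepsilon$ yields $|E|\geq\delta-2^{m_1+\cdots+m_n}|E_0|>\delta/4^n$. The principal obstacle is the geometric-combinatorial construction of the translates in the previous paragraph: one must fit $k$ dyadically translated copies of $Q_0$ into each $Q\in W_m^*$ with matching relative positions. Here the hypothesis $\delta<c(h)$ is essential, since it yields the volume estimate $k|Q_0|\leq k|E_0|/c(h)\leq\delta/(2^{m_1+\cdots+m_n}c(h))<|Q|$, so there is room for the packing, and by taking $j$ sufficiently large the dyadic restriction on the translations becomes inconsequential.
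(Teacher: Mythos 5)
Your general idea --- placing translated copies of a single configuration furnished by the $M_\Phi$-property periodically, so that every cube of $W_m^{*}$ receives an identical pattern --- is the same as the paper's, and your treatment of conditions 2), 4) and 5) via translation invariance, monotonicity of $M_B^{(\cdot)}$ and the common relative placement is fine. The gap is the packing step, which is exactly where your density-tuning mechanism (multiplicity $k$ per cube of $W_m^{*}$) differs from the paper's. You fix $k=\lfloor\delta/(2^{m_1+\cdots+m_n}|E_0|)\rfloor$ and justify the existence of $k$ pairwise disjoint translates of $Q_0$ inside each $Q\in W_m^{*}$ solely by the volume count $k|Q_0|\leq\delta|Q|/c(h)<|Q|$. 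Total volume below $|Q|$ does not imply that $k$ disjoint translates of a box fit in $Q$, and your constraints on $\tilde\varepsilon$ (namely $\tilde\varepsilon<2^{-\max_l m_l}$ and $2^{m_1+\cdots+m_n}\tilde\varepsilon^{\,n}<\delta(1-4^{-n})$) do not control $\diam Q_0$ relative to the edge length of $Q$ in terms of $\delta/c(h)$. Concretely: $n=2$, $m=(1,1)$, $c(h)=0.9$, $\delta=0.89$, and suppose the property returns a square $Q_0$ of side $0.3$ (so $\diam Q_0\approx 0.42$, admissible for your $\tilde\varepsilon$) with $|E_0|=c(h)|Q_0|=0.081$. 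Then $k=\lfloor 0.89/(4\cdot 0.081)\rfloor=2$, your volume check reads $0.18<0.25$, yet two disjoint translates of a $0.3$-square cannot be placed inside a $0.5$-square, since their projections on one of the axes must overlap. So the assertion ``there is room for the packing'' does not follow from $\delta<c(h)$ as written.

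The gap is repairable inside your scheme: either choose $\tilde\varepsilon$ small enough (depending also on $\delta/c(h)$, $m$ and $n$) that an axis-parallel grid of translates of $Q_0$ fills a fraction greater than $\delta/c(h)$ of each $Q$, or cap $k$ by the number $N\geq|Q|/(2^n|Q_0|)$ of translates a grid packing always provides; in the latter case $|E|=k\,2^{m_1+\cdots+m_n}|E_0|\geq|E_0|/(2^n|Q_0|)\geq c(h)/2^n>\delta/4^n$, so the $4^n$ slack in condition 1) absorbs the loss. The paper sidesteps packing altogether: it invokes the property once with a small $\eta$, replaces the bounding interval $I$ by the concentric interval $\widetilde I$ with $|E'|=\delta|\widetilde I|$ (this is where $\delta<c(h)$ is used, to get $\widetilde I\supset I$), passes to the smallest dyadic interval $I'\supset\widetilde I$ (which costs the factor $4^n$), verifies $I'\in W_i$ with $i>m$ because $\eta$ was chosen small, and then puts exactly one translated copy into every cube of $W_i^{*}$; no multiplicity, hence no packing argument, is needed.
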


\begin{proof}
Let us choose $\eta>0$ so that
\begin{equation}\label{eq:1}
    \eta<\varepsilon \;\;\text{and}\;\; 4\Big(\frac{1}{\delta c\,\Phi(h)}\Big)^{1/n}<\frac{1}{2^{m_1+\cdots+m_n}}\,.
\end{equation}
Due to the definition of $M_\Phi$-property there exist a set $E'\subset \mathbb{R}^n$ with positive measure, sets $P_B'$ $(B\in\Lambda)$ and an interval $I$ such that:
\begin{gather}
    P_B'\subset \big\{M_B^{(\varepsilon)}(h\chi_{{}_{E'}})>1\big\} \;\; (B\in\Lambda), \label{eq:2} \\
    \big\{P_B':\;B\in\Lambda\big\}\subset H_j^{*} \;\;\text{for some} \;\; j\in\mathbb{N}^n, \label{eq:3} \\
    |P_B'|\geq c\Phi(h)|E'| \;\; (B\in\Lambda), \label{eq:4} \\
    E'\subset I \;\;\text{and}\;\; P_B'\subset I \;\; (B\in\Lambda), \label{eq:5} \\
    \diam I<\eta, \label{eq:6} \\
    |E'|\geq c(h)|I|. \label{eq:7}
\end{gather}

Let $\widetilde{I}$ be the interval concentric with $I$ and such that $|E'|=\delta|\widetilde{I}|$. \eqref{eq:7} implies that $\widetilde{I}\supset I$. Put $t=\diam\widetilde{I}/\diam I$. Then by virtue of \eqref{eq:4} and \eqref{eq:5}
$$  \frac{|E'|}{\delta}=|\widetilde{I}|=t^n|I|\geq t^nc\,\Phi(h)|E'|.       $$
Therefore,
\begin{equation}\label{eq:8}
    t\leq\Big(\frac{1}{\delta c\,\Phi(h)}\Big)^{1/n}.
\end{equation}
Due to translation invariance of bases $B\in\Lambda$ we can assume that $\widetilde{I}$ has the form $(0,a_1)\times\cdots\times(0,a_n)$. By $I'$ denote the smallest among diadic intervals (i.e. among intervals from the family $\bigcup\limits_{i\in\mathbb{Z}^n} W_i)$ containing $\widetilde{I}$. Clearly, $I'\subset 4\widetilde{I}$. Therefore
\begin{equation}\label{eq:9}
    \frac{\delta}{4^n}\,|I'|\leq |E'|\leq\delta|I'|
\end{equation}
and (see \eqref{eq:8}, \eqref{eq:6} and \eqref{eq:1})
\begin{equation}\label{eq:10}
    \diam I'\leq 4\diam\widetilde{I}\leq 4\Big(\frac{1}{\delta c\,\Phi(h)}\Big)^{1/n}\diam I<\frac{1}{2^{m_1+\cdots+m_n}}\,.
\end{equation}
Let $i\in\mathbb{Z}^n$ be a $n$-tuple for which  $I'\in W_i$. \eqref{eq:10} implies that
\begin{equation}\label{eq:11}
    i>m.
\end{equation}
For each $Q\in W_i^{*}$, $T_Q$ be the translation mapping $I'$ into $Q$ and put
\begin{align*}
    E_Q & =T_Q(E') \;\; (Q\in W_i^{*}), \\
    P_{B,Q} & =T_Q(P_B') \;\; (B\in\Lambda, \;\; Q\in W_i^{*}), \\
    E & =\bigcup_{Q\in W_i^{*}} E_Q, \\
    P_B & =\bigcup_{Q\in W_i^{*}} P_{B,Q} \;\ (B\in\Lambda).
\end{align*}
Obviously,
\begin{equation}\label{eq:12}
    E_Q\subset Q \;\;\text{and}\;\; P_{B,Q}\subset Q \;\; (B\in\Lambda, Q\in W_i^{*}).
\end{equation}
By virtue of \eqref{eq:1}, \eqref{eq:2} and translation invariance of bases $B\in\Lambda$ we have
\begin{equation}\label{eq:13}
    P_{B,Q}\subset \big\{M_B^{(\eta)}(h\chi_{{}_{E_Q}})>1\big\}\subset\big\{M_B^{(\varepsilon)}(h\chi_{{}_{E_Q}})>1\big\} \;\;
            (B\in\Lambda, \;\; Q\in W_i^{*}).
\end{equation}
Since the intervals from $W_i^{*}$ are disjoint, then by \eqref{eq:3}, \eqref{eq:4}, \eqref{eq:9} and \eqref{eq:11}--\eqref{eq:13} it is easy to conclude that the sets $E$ and $P_B$ $(B\in\Lambda)$ satisfy all needed conditions.
\end{proof}

\begin{lemma}\label{lem:2}
Let $f$ be an increasing function on $[a,b]$ and $\varepsilon>0$. Then there exist points $h_1=a<h_2<\cdots<h_k=b$ such that
\begin{equation}\label{eq:2-1}
    f(h_{j+1}-)-f(h_j+)\leq\varepsilon \;\; (j=1,\dots,k-1).
\end{equation}
\end{lemma}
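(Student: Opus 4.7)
The plan is to construct the partition greedily from the left. Set $h_1=a$ and, given $h_j<b$, define
$$
h_{j+1}=\sup\big\{t\in(h_j,b]:\ f(t-)-f(h_j+)\leq\varepsilon\big\}.
$$
The first step is to check that this set is nonempty, so that the supremum is well-defined and strictly exceeds $h_j$. This follows from the sandwich $f(h_j+)\leq f(t-)\leq f(t)$ for $t>h_j$, together with $f(t)\to f(h_j+)$ as $t\to h_j+$, which gives $f(t-)\to f(h_j+)$ as $t\to h_j+$; hence the defining set contains a right neighborhood of $h_j$.

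Next I would verify \eqref{eq:2-1}. Using that $t\mapsto f(t-)$ is left-continuous (a standard property of monotone functions), the inequality $f(t-)-f(h_j+)\leq\varepsilon$ passes to the supremum $t=h_{j+1}$, yielding $f(h_{j+1}-)-f(h_j+)\leq\varepsilon$, as required.

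The main step is to show the process terminates after finitely many iterations. If $h_{j+1}<b$, then for every $t\in(h_{j+1},b]$ one has $f(t-)-f(h_j+)>\varepsilon$; letting $t\to h_{j+1}+$ and using the analogous squeeze $f(t-)\to f(h_{j+1}+)$, one obtains $f(h_{j+1}+)-f(h_j+)\geq\varepsilon$. Iterating gives $f(h_{j+1}+)-f(a+)\geq j\varepsilon$ so long as the process has not yet reached $b$; combined with $f(h_{j+1}+)\leq f(b)$, this forces termination within $\lfloor(f(b)-f(a+))/\varepsilon\rfloor+2$ steps. The only genuine care required is in the two one-sided limit identities for $f(\cdot-)$ at $h_j$ and $h_{j+1}$, both obtained from monotonicity by a squeeze argument; once these are in place, the induction and termination bound are immediate.
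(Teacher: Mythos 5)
Your proposal is correct and follows essentially the same greedy construction as the paper: define $h_{j+1}$ as the supremum of the points whose $f$-value exceeds $f(h_j+)$ by at most $\varepsilon$, and use monotonicity to force termination in finitely many steps. Your use of $f(t-)$ in the defining set and the explicit squeeze/left-continuity and counting arguments merely supply details the paper leaves implicit.
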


\begin{proof}
Let $h_1<\cdots<h_j$ are chosen. If $h_j=b$, then the construction is completed. If $h_j<b$, then let us take
$$  h_{j+1}=\sup\Big\{h\in (h_j,b]:\;f(h)-f(h_j+)\leq\varepsilon\Big\}.       $$
After same steps the construction will be completed (in opposite case we will have that $f(b)-f(a)=\infty)$. Clearly, the chosen numbers $h_1,\dots,h_k$ satisfy the condition \eqref{eq:2-1}.
\end{proof}

\begin{lemma}\label{lem:3}
Let $\Phi: (0,\infty)\rightarrow (0,\infty)$  be an increasing function, $f\in L(\mathbb{G}^n)$, $k\in\mathbb{N}$, $f/k\not\in\Phi(L)(\mathbb{G}^n)$ and $\alpha(h)>0$ $(h>1)$. Then there exist sets $A_j$ $(j\in\overline{1,m})$ and numbers $h_j$ $(j\in\overline{1,m})$ such that:
\begin{itemize}
\item[$1)$] $A_j\cap A_i=\varnothing$ $(i\neq j)$;

\item[$2)$] $k<h_j\leq|f(x)|$ $(j\in\overline{1,m}$, $x\in A_j)$;

\item[$3)$] $0<|A_j|\leq\alpha(\frac{h_j}{k})$ $(j\in\overline{1,m})$;

\item[$4)$] $\sum\limits_{j=1}^m \Phi(\frac{h_j}{k})|A_j|>k$.
\end{itemize}
\end{lemma}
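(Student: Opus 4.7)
The plan is to partition $\{|f|>k\}$ into level slices of the form $\{h_j\le|f|<h_{j+1}\}$ on which $\Phi(\cdot/k)$ is nearly constant, subdivide each slice to satisfy the measure bound in 3), and read off 4) from the divergence of the integral of $\Phi(|f|/k)$. To begin, I note that the hypothesis $f/k\notin\Phi(L)(\mathbb{G}^n)$ is exactly $\int_{\{f\neq 0\}}\Phi(|f|/k)=\infty$; since on $\{0<|f|\le k\}$ the integrand is at most $\Phi(1)$ and the measure of $\mathbb{G}^n$ is $1$, the divergence must come from the tail, so $\int_{\{|f|>k\}}\Phi(|f|/k)\,dx=\infty$.

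Next, fix a small $\eta>0$, pick a continuity point $s_1>1$ of $\Phi$, and then choose $R>ks_1$ large enough that $\int_{\{ks_1\le|f|<R\}}\Phi(|f|/k)>k+2\eta$. Apply Lemma~\ref{lem:2} to $\Phi$ on $[s_1,R/k]$ with tolerance $\eta$. Because the increasing function $\Phi$ has at most countably many discontinuities, continuity points are dense, and by slightly perturbing the interior partition points I may assume the resulting $s_1<s_2<\cdots<s_{M+1}=R/k$ has $s_1,\dots,s_M$ all continuity points of $\Phi$, so that $\Phi(s_j+)=\Phi(s_j)$ and Lemma~\ref{lem:2} gives $\Phi(s_{j+1}-)-\Phi(s_j)\le\eta$. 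Setting $h_j=ks_j$ and $B_j=\{h_j\le|f|<h_{j+1}\}$, on each $B_j$ we have $\Phi(|f|/k)\le\Phi(s_{j+1}-)\le\Phi(h_j/k)+\eta$, hence $\int_{B_j}\Phi(|f|/k)\le(\Phi(h_j/k)+\eta)|B_j|$, which rearranges to $\Phi(h_j/k)|B_j|\ge\int_{B_j}\Phi(|f|/k)-\eta|B_j|$. Summing over $j$ and using $\sum_j|B_j|\le|\mathbb{G}^n|=1$ gives
$$\sum_{j=1}^M\Phi(h_j/k)|B_j|\ge\int_{\{h_1\le|f|<R\}}\Phi(|f|/k)-\eta>(k+2\eta)-\eta=k+\eta.$$

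Finally, for each $j$ with $|B_j|>0$ I partition $B_j$ into finitely many disjoint measurable pieces of positive measure $\le\alpha(h_j/k)$ (a routine measure-theoretic step) and assign each piece the label $h_j$; discarding empty slices and reindexing yields the collection $\{A_j,h_j\}$. Conditions 1)--3) hold by construction, and 4) follows at once because the subdivision leaves $\sum_{j,l}\Phi(h_j/k)|A_{j,l}|=\sum_j\Phi(h_j/k)|B_j|>k$. The one genuine obstacle is that Lemma~\ref{lem:2} only controls $\Phi(s_{j+1}-)-\Phi(s_j+)$, whereas condition 4) is phrased with $\Phi(h_j/k)=\Phi(s_j)$ at the \emph{left} endpoint of each slice; this is precisely why the intermediate partition points must be chosen at continuity points of $\Phi$, which is always possible by density.
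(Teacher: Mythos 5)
Your overall plan (slice $\{|f|>k\}$ using Lemma~\ref{lem:2}, subdivide the slices to meet the measure bound, and sum) is the same as the paper's, but the step you yourself call ``the one genuine obstacle'' is not actually resolved by your perturbation argument, and this is a real gap. Lemma~\ref{lem:2} only controls $\Phi(s_{j+1}-)-\Phi(s_j+)$, and its construction necessarily places a partition point at every jump of $\Phi$ of size larger than $\eta$: if such a jump point lay in the open interior of a subinterval, the oscillation bound for that subinterval could not hold. Consequently you cannot, in general, replace a partition point $s_j$ by a nearby continuity point while keeping the conclusion of Lemma~\ref{lem:2}: if $\Phi$ jumps by $J\gg\eta$ at $s_j$ and you move $s_j$ to a continuity point on either side, then one of the two adjacent subintervals now contains that jump in its interior, and the quantity $\Phi(\cdot-)-\Phi(\cdot+)$ across it is at least $J>\eta$. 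So after the ``perturbation'' the inequality $\Phi(s_{j+1}-)-\Phi(s_j)\le\eta$ on which your slice estimate rests can fail by an arbitrarily large amount; concretely, if $\Phi(s_j+)-\Phi(s_j)$ is huge and most of the measure of $B_j=\{ks_j\le|f|<ks_{j+1}\}$ sits where $|f|>ks_j$, then $\Phi(h_j/k)|B_j|$ badly underestimates $\int_{B_j}\Phi(|f|/k)$ and condition 4) is not obtained. The lemma is still true, but your construction does not prove it.

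The paper's proof faces exactly this difficulty and resolves it without moving the partition points. Each slice is split into the exact level set $E_q=\{|f|=k\lambda_q\}$, on which $\Phi(|f|/k)$ equals $\Phi(\lambda_q)$ and no one-sided limit is involved, and the open slab $E_q'=\{k\lambda_q<|f|<k\lambda_{q+1}\}$, inside which interior parameters $\lambda_q<t_q<\tau_q<\lambda_{q+1}$ are chosen so that the discarded part $E_q'\setminus\{kt_q<|f|<k\tau_q\}$ carries $\Phi$-mass less than $1/p$. Because $t_q$ and $\tau_q$ are interior points, Lemma~\ref{lem:2} yields $\Phi(\tau_q)-\Phi(t_q)<1$ with no continuity hypothesis, and on $E_q^*=\{kt_q<|f|<k\tau_q\}$ one may legitimately take $h=kt_q\le|f|$. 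The total loss (at most $1$ from the $1/p$'s and at most $1$ from $\sum_q|E_q^*|\le 1$) is absorbed by starting from a slice with $\int\Phi(|f|/k)\ge 4k$ rather than barely above $k$. To repair your argument you need this (or an equivalent) device: treat the level sets $\{|f|=ks_j\}$ separately and push $h_j$ strictly inside each slice while discarding a sub-slab of small $\Phi$-mass, instead of trying to relocate the Lemma~\ref{lem:2} points to continuity points.
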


\begin{proof}
Since $f/k\not\in\Phi(L)(\mathbb{G}^n)$, then there are numbers $a$ and $b$ such that
\begin{equation}\label{eq:3-1}
    k<a<b \;\;\text{and}\;\; \int\limits_{\{a\leq|f|<b\}} \Phi\Big(\frac{|f|}{k}\Big)\geq 4k.
\end{equation}
By virtue of Lemma \ref{lem:2} there are $\lambda_1=\frac{a}{k}<\lambda_2\cdots<\lambda_{p+1}=\frac{b}{k}$ with
\begin{equation}\label{eq:3-2}
    \Phi(\lambda_{q+1}-)-\Phi(\lambda_q+)<1 \;\; (q\in\overline{1,p}).
\end{equation}
For $q\in\overline{1,p}$ denote
$$  E_q=\big\{|f|=k\lambda_q\big\} \;\;\text{and}\;\; E_q'=\big\{k\lambda_q<|f|<k\lambda_{q+1}\big\}.       $$
Let us choose numbers $t_q$ and $\tau_q$ $(q\in\overline{1,p})$ so that
\begin{gather}
    \lambda_q<t_q<\tau_q<\lambda_{q+1}, \nonumber \\
    \int\limits_{E_q'\setminus\{kt_q<|f|<k\tau_q\}} \Phi\Big(\frac{|f|}{k}\Big)<\frac{1}{p}\,. \label{eq:3-3}
\end{gather}
Put $E_q^{*}=\{kt_q<|f|<k\tau_q\}$ $(q\in\overline{1,p})$. From \eqref{eq:3-2} we have: $\Phi(\tau_q)-\Phi(t_q)<1$ $(q\in\overline{1,p})$. Therefore for each $q\in\overline{1,p}$ we write
$$  \Phi(t_q)|E_q^{*}|>(\Phi(\tau_q)-1)|E_q^{*}|\geq \int\limits_{E_q^{*}} \Phi\Big(\frac{|f|}{k}\Big)-|E_q^{*}|.   $$
Consequently (see \eqref{eq:3-1} and \eqref{eq:3-3}),
\begin{multline*}
    \sum_{q=1}^p \Phi(\lambda_q)|E_q|+\sum_{q=1}^p \Phi(t_q)|E_q^{*}|\geq \\
    \geq \sum_{q=1}^p \Phi(\lambda_q)|E_q|+\sum_{q=1}^p \bigg(\int\limits_{E_q'} \Phi\Big(\frac{|f|}{k}\Big)-\frac{1}{p}-|E_q^{*}|\bigg)\geq \\
    \geq \int\limits_{\{a\leq|f|<b\}} \Phi\Big(\frac{|f|}{k}\Big)-2\geq 4k-2>k.
\end{multline*}

Denote
$$  N_1=\big\{q\in\overline{1,p}:\;|E_q|>0\big\} \;\;\text{and}\;\; N_2=\big\{q\in\overline{1,p}:\;|E_q^{*}|>0\big\}.   $$
For each $q\in N_1$, $\{E_{q,1},\dots,E_{q,\nu_q}\}$ be a partition of $E_q$ such that $0<|E_{q,\nu}|\leq\alpha(\lambda_q)$ $(\nu\in\overline{1,\nu_q})$ and for each $i\in N_2$, $\{E_{i,1}^{*},\dots,E_{i,\ell_i}\}$ be a partition of $E_i^{*}$ such that $0<|E_{i,\ell}^{*}|\leq\alpha(t_i)$ $(\ell\in\overline{1,\ell_i})$.

Put
$$  T=\big\{E_{q,\nu}:\;q\in N_1,\;\nu\in\overline{1,\nu_q}\big\}\cup\big\{E_{i,\ell}^{*}:\;i\in N_2,\;\ell\in\overline{1,\ell_i}\big\}.     $$
Let $m=\sum\limits_{q\in N_1} \nu_q+\sum\limits_{q\in N_2} \ell_i$ and $\sigma:\overline{1,m}\to T$ be a bijection. For $j\in\overline{1,m}$ denote $A_j=\sigma(j)$. Numbers $h_j$ define as follows: $h_j=k\lambda_q$ if $A_j=E_{q,\nu}$ for some $q$ and $\nu$, and $h_j=kt_i$ if $A_j=E_{i,\ell}^{*}$ for some $i$ and $\ell$. It is easy to see that sets $A_j$ and numbers $h_j$ satisfy the needed conditions.
\end{proof}

\begin{lemma}\label{lem:4}
Let $\Phi: (0,\infty)\rightarrow (0,\infty)$  be an increasing function, $f\in L(\mathbb{G}^n)$, $f/h\not\in\Phi(L)(\mathbb{G}^n)$ for every $h\geq 1$ and $\alpha(h)>0$ for every $h>1$. Then there exist a sequence of measurable sets $(A_k)$ and sequences of positive numbers $(h_k)$ and $(q_k)$ such that:
\begin{itemize}
\item[$1)$] $A_k\cap A_m=\varnothing$ $(k\neq m)$,

\item[$2)$] $q_k<h_k\leq|f(x)|$ $(k\in\mathbb{N}$, $x\in A_k)$,

\item[$3)$] $\lim\limits_{k\to\infty} q_k=\infty$,

\item[$4)$] $0<|A_k|\leq\alpha(\frac{h_k}{q_k})$ $(k\in\mathbb{N})$,

\item[$5)$] $\sum\limits_{k=1}^\infty \Phi(\frac{h_k}{q_k})|A_k|=\infty$.
\end{itemize}
\end{lemma}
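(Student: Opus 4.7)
The plan is to apply Lemma \ref{lem:3} inductively at a sequence of parameter values $q_\kappa\to\infty$ and concatenate the finite families it produces. Fix any strictly increasing sequence $q_\kappa\in\mathbb{N}$ (for instance $q_\kappa=\kappa$); the standing hypothesis $f/h\notin\Phi(L)(\mathbb{G}^n)$ for every $h\geq 1$ implies in particular $f/q_\kappa\notin\Phi(L)(\mathbb{G}^n)$, so Lemma \ref{lem:3} applies at every stage and produces finitely many disjoint sets $A_1^{(\kappa)},\ldots,A_{m_\kappa}^{(\kappa)}$ together with numbers $h_1^{(\kappa)},\ldots,h_{m_\kappa}^{(\kappa)}$ satisfying items $1)$--$4)$ of that lemma with $k=q_\kappa$.

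The main obstacle is disjointness across different stages: Lemma \ref{lem:3} controls disjointness only within a single application. I would circumvent this by arranging that the sets produced at stage $\kappa$ are supported in the upper level set $\{|f|\geq M_\kappa\}$, where $M_\kappa$ is chosen to exceed every value of $|f|$ on the sets selected at stages $1,\ldots,\kappa-1$ (a finite bound, since each prior stage's sets lie in $\{|f|\leq b\}$ for some $b$). Inspecting the proof of Lemma \ref{lem:3}, the output sets all lie in $\{a\leq|f|\leq b\}$ for the pair $(a,b)$ chosen in \eqref{eq:3-1}, and nothing forces $a$ to be small. Indeed, for any $M>0$,
$$\int_{\{0<|f|<M\}}\Phi(|f|/q_\kappa)\leq\Phi(M/q_\kappa)\,|\mathbb{G}^n|<\infty,$$
so $f/q_\kappa\notin\Phi(L)(\mathbb{G}^n)$ forces $\int_{\{|f|\geq M\}}\Phi(|f|/q_\kappa)=\infty$, and \eqref{eq:3-1} can be re-established with $a>\max(q_\kappa,M_\kappa)$. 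Running the remainder of the construction of Lemma \ref{lem:3} unchanged then yields sets entirely contained in $\{|f|\geq M_\kappa\}$, hence disjoint from all previously chosen sets. This small modification inside the proof of Lemma \ref{lem:3} is the only non-routine point.

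With disjointness secured, I enumerate the sets from stage $1$, then stage $2$, and so on, into a single sequence $(A_k)$, attaching to each index the corresponding $h_j^{(\kappa)}$ and $q_\kappa$. Condition $1)$ is disjointness across the whole concatenation; conditions $2)$ and $4)$ are inherited directly from conclusions $2)$ and $3)$ of Lemma \ref{lem:3}; condition $3)$ holds because past the finitely many indices coming from stages $1,\ldots,\kappa-1$ every $q_k$ equals some $q_{\kappa'}$ with $\kappa'\geq\kappa$, and $q_\kappa\to\infty$; and condition $5)$ follows from conclusion $4)$ of Lemma \ref{lem:3}, as each stage contributes more than $q_\kappa$ to the sum while $\sum_\kappa q_\kappa=\infty$.
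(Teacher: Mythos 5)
Your argument is correct, and its skeleton (apply Lemma \ref{lem:3} at a parameter tending to infinity and concatenate the finite outputs) is the same as the paper's; the difference lies in how disjointness across stages is secured. You re-open the proof of Lemma \ref{lem:3} and strengthen it: since $\int_{\{0<|f|<M\}}\Phi(|f|/q_\kappa)\leq\Phi(M/q_\kappa)<\infty$, the choice of $a$ in \eqref{eq:3-1} can be pushed above any prescribed level $M_\kappa$, so the stage-$\kappa$ sets live in $\{|f|\geq M_\kappa\}$ with $M_\kappa$ exceeding the finite bound $b$ controlling all earlier stages; this is a valid strengthening (the tail integral is infinite, and monotone convergence yields a suitable $b$), and with it conditions $1)$--$5)$ follow exactly as you say. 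The paper instead keeps Lemma \ref{lem:3} as a black box: it first chooses annuli $\{a_m<|f|<b_m\}$ with $\int_{\{a_m<|f|<b_m\}}\Phi(|f|/m)>m$, groups them over disjoint infinite index sets $N_i$ into disjoint sets $E_i$, observes that each restricted function $f\chi_{{}_{E_i}}$ still satisfies $f\chi_{{}_{E_i}}/h\notin\Phi(L)(\mathbb{G}^n)$ for every $h\geq1$, and applies Lemma \ref{lem:3} verbatim to $f\chi_{{}_{E_i}}$ with parameter $i$; disjointness across stages is then automatic because $A_{i,j}\subset E_i$. Your route trades the construction of the $E_i$ for a (modest) modification inside the proof of Lemma \ref{lem:3}; the paper's route leaves the lemma untouched at the cost of the preliminary splitting. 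Both are complete proofs of the statement.
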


\begin{proof}
It is easy to find sequences of numbers $(a_m)$ and $(b_m)$ such that:
\begin{gather}
    0<a_m<b_m<a_{m+1} \;\; (m\in\mathbb{N}), \label{eq:4-1} \\
    \int\limits_{\{a_m<|f|<b_m\}} \Phi\Big(\frac{|f|}{m}\Big)>m \;\; (m\in\mathbb{N}). \label{eq:4-2}
\end{gather}
Let $N_i$ $(i\in\mathbb{N})$ be disjoint infinite subsets of $\mathbb{N}$ and let
$$  E_i=\bigcup_{m\in N_i} \big\{a_m<|f|<b_m\big\} \;\; (i\in\mathbb{N}).       $$
From \eqref{eq:4-1} and \eqref{eq:4-2} it follows that the sets $E_i$ are disjoint and $f\chi_{{}_{E_i}}/h\not\in\Phi(L)(\mathbb{G}^n)$ for each $i\in\mathbb{N}$ and $h\geq 1$.

Let $i$ be an arbitrary natural number. Using Lemma \ref{lem:3} for parameters $\Phi$, $f\chi_{{}_{E_i}}$, $i$ and $\alpha$ we can find sets $A_{i,j}\subset E_i$ $(j\in\overline{1,m_i})$ and numbers $h_{i,j}$ $(j\in\overline{1,m_i})$ with the properties;
\begin{itemize}
\item[1)] $A_{i,j}\cap A_{i,j'}=\varnothing$ $(j\neq j')$,

\item[2)] $i<h_{i,j}\leq|f(x)|$ $(j\in\overline{1,m_i}$, $x\in A_{i,j})$,

\item[3)] $0<|A_{i,j}|\leq c(\frac{h_{i,j}}{i})$ $(j\in\overline{1,m_i})$,

\item[4)] $\sum\limits_{j=1}^{m_i} \Phi(\frac{h_{i,j}}{i})|A_{i,j}|>i$.
\end{itemize}

Making numeration by an index $k \in\mathbb{N}$ of the sequences
\begin{align*}
    & A_{1,1},\dots,A_{1,m_1};A_{2,1},\dots,A_{2,m_2};\ldots \\
    & h_{1,1},\dots,h_{1,m_1};h_{2,1},\dots,h_{2,m_2};\ldots\\
    & 1,\;\;\;\dots,\;\;\;1;\;\;\;\;2,\;\;\;\dots,\;\;\;\;2;\ldots
\end{align*}
we receive sequences $(A_k)$, $(h_k)$ and $(q_k)$ that will satisfy all needed conditions.
\end{proof}

\begin{lemma}\label{lem:5}
Let $j\in\mathbb{N}^n$, $A_1\in H_j^{*}$, $A_2\subset \mathbb{G}^n$ and $|A_2\cap Q|=|A_2|\,|Q|$ for each $Q\in W_j^{*}$. Then $|A_1\cap A_2|=|A_1|\,|A_2|$.
\end{lemma}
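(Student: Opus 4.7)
The proof is a short direct computation, so the plan is essentially to unwind the definitions and use finite additivity of Lebesgue measure.

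First I would note that, by the definitions of $W_j$ and $\mathbb{G}^n$, the family $W_j^{*}$ consists of $2^{j_1+\cdots+j_n}$ pairwise (essentially) disjoint dyadic intervals whose union is $\mathbb{G}^n$ (up to a set of measure zero). The definition of $H_j^{*}$ then says exactly that $A_1$ is the union of some subfamily $\mathcal{F}\subset W_j^{*}$, so
$$
A_1=\bigcup_{Q\in\mathcal{F}}Q,\qquad |A_1|=\sum_{Q\in\mathcal{F}}|Q|,
$$
with the terms in the latter sum disjoint.

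Next I would intersect with $A_2$. Because the intervals in $\mathcal{F}$ are pairwise disjoint, so are the sets $A_2\cap Q$ for $Q\in\mathcal{F}$, and therefore
$$
|A_1\cap A_2|=\sum_{Q\in\mathcal{F}}|A_2\cap Q|.
$$
Now the hypothesis $|A_2\cap Q|=|A_2|\,|Q|$ (valid for every $Q\in W_j^{*}$, in particular for every $Q\in\mathcal{F}$) can be plugged in term by term, giving
$$
|A_1\cap A_2|=\sum_{Q\in\mathcal{F}}|A_2|\,|Q|=|A_2|\sum_{Q\in\mathcal{F}}|Q|=|A_2|\,|A_1|,
$$
which is the desired equality.

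There is no real obstacle here; the only mildly nontrivial point is to observe that the hypothesis, stated for each individual $Q\in W_j^{*}$, automatically passes to unions of such $Q$'s by disjointness, and this is exactly what makes the lemma work. The whole argument is three lines once one writes $A_1$ in its canonical dyadic decomposition.
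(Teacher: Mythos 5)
Your argument is correct and coincides with the paper's proof: both write $A_1$ as a disjoint union of intervals from $W_j^{*}$, apply additivity of the measure to $A_1\cap A_2$, and substitute the hypothesis $|A_2\cap Q|=|A_2|\,|Q|$ term by term. Nothing is missing.
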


\begin{proof}
Let $T$ be the subfamily of $W_j^{*}$ for which $A_1=\bigcup\limits_{Q\in T} Q$. Then taking into account the condition of the lemma we have
\[  |A_1\cap A_2|=\sum_{Q\in T} |Q\cap A_2|=\sum_{Q\in T} |Q|\,|A_2|=|A_1|\,|A_2|.      \qedhere        \]
\end{proof}

\begin{lemma}\label{lem:6}
Suppose for every $k\in\mathbb{N}$ there are valid conditions: $m_k$, $j_k\in\mathbb{N}^n$, $m_k\leq j_k\leq m_{k+1}$, $A_k\in H_{j_k}^{*}$ and $|A_k\cap Q|=|A_k|\,|Q|$ for each $Q\in W_{m_k}^{*}$. Then $(A_k)$ is a sequence of independent sets.
\end{lemma}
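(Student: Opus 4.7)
The proof will be a straightforward induction on the number of sets, relying on Lemma~\ref{lem:5} at each step. The key structural fact is the nesting chain $j_1\leq m_2\leq j_2\leq m_3\leq j_3\leq\cdots$, which ties together the ``scale'' on which each $A_k$ is a union of dyadic intervals and the ``scale'' on which it is uniformly distributed.

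First I would record two monotonicity facts about dyadic partitions: if $p\leq q$ componentwise, then $W_q$ refines $W_p$, so every $Q\in W_p^{*}$ is a disjoint union of intervals from $W_q^{*}$, and consequently $H_p^{*}\subset H_q^{*}$. I would then fix indices $k_1<\cdots<k_r<k_{r+1}$ and prove by induction on $r$ that $|A_{k_1}\cap\cdots\cap A_{k_r}|=|A_{k_1}|\cdots|A_{k_r}|$.

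For the inductive step, set $B_r=A_{k_1}\cap\cdots\cap A_{k_r}$. Since $A_{k_s}\in H_{j_{k_s}}^{*}$ and $j_{k_s}\leq j_{k_r}$ for $s\leq r$, each $A_{k_s}$ lies in $H_{j_{k_r}}^{*}$ by the refinement remark, and hence so does their intersection $B_r$. On the other hand, $A_{k_{r+1}}$ satisfies the uniform distribution condition $|A_{k_{r+1}}\cap Q|=|A_{k_{r+1}}|\,|Q|$ for every $Q\in W_{m_{k_{r+1}}}^{*}$. Since $j_{k_r}\leq m_{k_r+1}\leq m_{k_{r+1}}$, every $Q'\in W_{j_{k_r}}^{*}$ is a disjoint union of intervals $Q\in W_{m_{k_{r+1}}}^{*}$, and summing the equality over these $Q$ one obtains
\[
    |A_{k_{r+1}}\cap Q'|=|A_{k_{r+1}}|\,|Q'| \quad (Q'\in W_{j_{k_r}}^{*}).
\]
Thus the hypotheses of Lemma~\ref{lem:5} are met with $j=j_{k_r}$, $A_1=B_r$, $A_2=A_{k_{r+1}}$, yielding $|B_r\cap A_{k_{r+1}}|=|B_r|\,|A_{k_{r+1}}|$. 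Combined with the inductive hypothesis $|B_r|=|A_{k_1}|\cdots|A_{k_r}|$ this completes the step.

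The only real bookkeeping issue—which is as close to an obstacle as this proof gets—is making sure that when intersecting several sets one can still appeal to Lemma~\ref{lem:5}, i.e.\ that the intersection remains a union of dyadic intervals on some scale finer than (or equal to) the scale on which the next factor is uniformly distributed. The telescoping bound $j_{k_s}\leq m_{k_s+1}\leq m_{k_{s+1}}$ is precisely what guarantees this compatibility, and it makes the induction go through for arbitrary finite subcollections, which is exactly what independence requires.
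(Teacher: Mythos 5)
Your proof is correct and follows essentially the same route as the paper: induction over finite subcollections, with Lemma~\ref{lem:5} applied at each step, using that $A_{k_1}\cap\cdots\cap A_{k_r}\in H_{j_{k_r}}^{*}$ and the chain $j_{k_r}\leq m_{k_r+1}\leq m_{k_{r+1}}$. The only cosmetic difference is that you transfer the uniform-distribution property of $A_{k_{r+1}}$ down to the coarser scale $W_{j_{k_r}}^{*}$ by summation, whereas the paper in effect applies Lemma~\ref{lem:5} at the finer scale $m_{k_{r+1}}$; both are valid readings of the same refinement argument.
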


\begin{proof}
Let $q\geq 2$ and $k_1<k_2<\cdots<k_q$. We must prove the equality
\begin{equation}\label{eq:5-1}
    \Big|\bigcap_{\nu=1}^q A_{k_\nu}\Big|=\prod_{\nu=1}^q |A_{k_\nu}|.
\end{equation}
For the case $q=2$, \eqref{eq:5-1} directly follows from Lemma \ref{lem:5}. Let us argue passing from $q-1$ to $q$. Assume that \eqref{eq:5-1} is valid for sets $A_{k_1},\dots,A_{k_{q-1}}$. It is easy to see that
$$  \bigcap_{\nu=1}^{q-1} A_{k_\nu}\in H_j^{*}, \;\;\text{where}\;\; j=j_{k_{q-1}}.     $$
Now taking into account that $m_{k_q}\geq m_{k_{q-1}+1}\geq j_{k_{q-1}}$ and $|A_{k_q}\cap Q|=|A_{k_q}|\,|Q|$ for each $Q\in W_m^{*}$, where $m=m_{k_q}$, by virtue of Lemma \ref{lem:5} and induction assumption we write
\[  \Big|\bigcap_{\nu=1}^{q} A_{k_\nu}\Big|=\Big|\bigcap_{\nu=1}^{q-1} A_{k_\nu}\Big|\,|A_{k_\nu}|=\prod_{\nu=1}^{q} |A_{k_\nu}|.   \qedhere    \]
\end{proof}

We will need the following well-known result from measure theory (see, e.g., \cite[Ch. ``Uniform Approximation'']{6} or \cite[\S~2]{7}).

\vskip+0.2cm
\noindent \textbf{Theorem A.}
\textit{For every measurable sets $A_1,A_2\subset \mathbb{R}^n$ with $|A_1|=|A_2|>0$ there exists a measure preserving and invertible mapping $\omega:A_1\to A_2$.}
\vskip+0.2cm

\section{Proof of Theorem 1}
\label{sec:4}

By Lemma \ref{lem:4} there are sequences of sets $(A_k)$ and of positive numbers $(h_k)$ and $(q_k)$ such that:
\begin{gather}
    A_k\cap A_m=\varnothing \;\; (k\neq m), \label{eq:th-1} \\
    q_k<h_k\leq|f(x)| \;\; (k\in\mathbb{N}, \;\; x\in A_k), \label{eq:th-2} \\
    \lim_{k\to\infty} q_k=\infty, \label{eq:th-3} \\
    0<|A_k|\leq c\Big(\frac{h_k}{q_k}\Big) \;\; (k\in\mathbb{N}), \nonumber \\
   \sum_{k=1}^\infty \Phi\Big(\frac{h_k}{q_k}\Big)|A_k|=\infty. \label{eq:th-4}
\end{gather}
According to Lemma \ref{lem:1}, for every $k\in\mathbb{N}$ and $m_k\in\mathbb{N}^n$ there exist a set $E_k$ and a family of sets $\{P_{B,k}:\;B\in\Lambda\}$ with the properties:
\begin{gather}
    \big\{P_{B,k}:\;B\in\Lambda\big\}\subset H_{j_k}^{*} \;\;\text{for some}\;\;
            j_k\in\mathbb{N}^n \;\;\text{with}\;\; j_k\geq m_k, \label{eq:th-5} \\
    \frac{|A_k|}{4^n}\leq |E_k|\leq |A_k|, \label{eq:th-6} \\
    P_{B,k}\subset\bigg\{M_B^{(1/k)}\Big(\frac{h_k}{q_k}\,\chi_{{}_{E_k}}\Big)>1\bigg\}=
            \big\{M_B^{(1/k)}(h_k\chi_{{}_{E_k}})>q_k\big\} \;\; (B\in\Lambda), \label{eq:th-7} \\
    |P_{B,k}|\geq c\Phi\Big(\frac{h_k}{q_k}\Big)|A_k| \;\; (B\in\Lambda), \label{eq:th-8} \\
    |P_{B,k}\cap Q|=|P_{B,k}|\,|Q| \;\; (B\in\Lambda, \;\; Q\in W_{m_k}^{*}). \label{eq:th-9}
\end{gather}
From \eqref{eq:th-4}, \eqref{eq:th-6} and \eqref{eq:th-8},
\begin{equation}\label{eq:th-10}
    \sum_{k=1}^\infty |P_{B,k}|=\infty \;\; (B\in\Lambda).
\end{equation}
Obviously, we can choose $(m_k)$ so that $m_{k+1}\geq j_k$ $(k\in\mathbb{N})$. Then by \eqref{eq:th-5}, \eqref{eq:th-9} and Lemma \ref{lem:6}, $(P_{B,k})$ is a sequence of independent sets for every $B\in\Lambda$. Therefore by virtue of \eqref{eq:th-10} and Borel--Cantelli lemma we have
\begin{equation}\label{eq:th-11}
    \Big|\olim_{k\to\infty} P_{B,k}\Big|=1 \;\;\text{for every}\;\; B\in\Lambda.
\end{equation}

Put $g=\sup\limits_{k\in\mathbb{N}} h_k\chi_{{}_{E_k}}$. Since $g\leq\sum\limits_{k=1}^\infty h_k\chi_{{}_{E_k}}$, then by  \eqref{eq:th-1} and \eqref{eq:th-6},
$$  \int\limits_{\mathbb{G}^n} g\leq \sum_{k=1}^\infty h_k|E_k|\leq \sum_{k=1}^\infty h_k|A_k|\leq \int\limits_{\mathbb{G}^n} |f|<\infty.   $$
Thus $g\in L(\mathbb{G}^n)$. Let $B\in\Lambda$ and $x\in\olim\limits_{k\to\infty} P_{B,k}$. Then by \eqref{eq:th-3} and \eqref{eq:th-7}, $\overline{D}_B(\int g,x)=\infty$. Consequently, taking into account \eqref{eq:th-11} we conclude that $g\in S_\Lambda$. Obviously, we have also that
\begin{equation}\label{eq:th-12}
    g\chi_{{}_{\{g<\infty\}}}\in S_\Lambda.
\end{equation}

Denote
$$  E=\bigcup_{k=1}^\infty E_k \;\;\text{and}\;\; E_k'=E_k\setminus \bigcup_{j>k} E_j \;\; (k\in\mathbb{N}).        $$
It is easy to check that
\begin{gather*}
     E_k'\cap E_m'=\varnothing \;\; (k\neq m), \\
     g\chi_{{}_{\{g<\infty\}}}=\sum_{k=1}^\infty h_k\chi_{{}_{E_k'}}.
\end{gather*}
For each $k\in\mathbb{N}$ let us choose a measurable set $A_k'$ so that $A_k'\subset A_k$ and $|A_k'|=|E_k'|$. Denote $A=\bigcup\limits_{k=1}^\infty A_k'$. Due to Theorem A there exist a measure preserving and invertible mappings $\omega_k:A_k'\to E_k'$ $(k\in\mathbb{N})$ and $\omega_0:\mathbb{G}^n\setminus A\to\mathbb{G}^n\setminus E$. A mapping $\omega$ define as follows
$$  \omega(x)=\begin{cases}
        \omega_k(x) & (k\in\mathbb{N}, \;\; x\in A_k'), \\
        \omega_0(x) & (x\in\mathbb{G}^n\setminus A), \\
        x & (x \in \mathbb{R}^n\setminus \mathbb{G}^n).
            \end{cases}     $$
It is easy to check that: 1) $\omega$ is measure preserving and invertible; and 2)$|f|\circ\omega\geq g$. Now taking into account \eqref{eq:th-12} we conclude the validity of the theorem.\ \hfill \qed

\begin{remark}\label{rem:1-1}
For the case of finite or countable family $\Lambda$,  Theorem 1 can be strengthened by achieving divergence at every point with respect  to each $B\in\Lambda$, i.e. a mapping $\omega$  can be chosen so that for every basis $B\in\Lambda$ the equality $\overline{D}_B\left(\int |f|\circ \omega,x\right)=\infty$ would be fulfilled at every $x\in\mathbb{G}^n$.
\end{remark}

\section{Applications of Theorem \ref{th:1}}
\label{sec:5}

\noindent \textbf{5.1.} Zygmund problem in general setting may be formulated as follows: \emph{Let $B$ be a translation invariant basis in $\mathbb{R}^n$ and let  $\Delta(B)=\big\{B(\gamma):\;\gamma\in\Gamma_n\big\}$. Is the class $S_{\Delta(B)}$ non-empty? }

Below it is found  a quite general condition for a basis $B$(see Corollary 1) fulfillment of which provide the positive answer to the posed question.

For a basis $B$ denote
$$  \Phi_B(h)=\lim_{t\to\infty} \olim_{r\to 0} \frac{|\{M_B^{(tr)}(h\chi_{{}_{V_r}})>1\}|}{|V_r|} \;\; (h>0),      $$
where $V_r=\{x\in\mathbb{R}^n:\dist(x,0)<r\}$. Note that:
\begin{enumerate}
\item[1)] $\Phi_B$ is increasing;

\item[2)] If $B$ is a convex basis, then by virtue of the estimation $M_B(\chi_{{}_{V_r}})(x)< cr/\dist(x,V_r)$ $(x\not\in V_{2r})$ (see \cite[Lemma 1]{8}) we have
$$  \Phi_B(h)=\olim_{r\to 0} \frac{|\{M_B^{(tr)}(h\chi_{{}_{V_r}})>1\}|}{|V_r|};        $$

\item[3)] If $B$ is translation and homothecy invariant, then
$$  \Phi_B(h)=\big|\big\{M_B^{(tr)}(h\chi_{{}_{V}})>1\big\}\big|,     $$
where $V=\{x\in\mathbb{R}^n:\dist(x,0)<1\}$.
\end{enumerate}

Let us call sets from a class $\Delta$ \emph{uniformly measurable in Jordan sense} if for every $\varepsilon>0$ there exist $k\in\mathbb{N}$ and $\varepsilon>0$ such that:
\begin{enumerate}
\item[1)] $kr^n<\varepsilon$;

\item[2)] for every $E\in\Delta$, there exists a cover of $\partial E$ consisting of $k$ balls with radius $\varepsilon$.
\end{enumerate}

\begin{remark}\label{rem:1}
If $\Delta$ is a collection of measurable in Jordan sense and mutually congruent sets, then it is easy to see that the sets from $\Delta$ are uniformly measurable in Jordan sense.
\end{remark}

\begin{lemma}\label{lem:7}
Let $\Delta$ be a non-empty family of sets that are uniformly measurable in Jordan sense and let $\inf\limits_{E \in\Delta}|E|>0$. Then for every $\varepsilon>0$ there exists $m_0\in\mathbb{N}^n$ such that
$$  \Big|\bigcup_{Q\in W_m,\,Q\subset E} Q\Big|>(1-\varepsilon)|E| \;\;\text{and}\;\;
            \Big|\bigcup_{Q\in W_m,\,Q\cap E\neq\varnothing} Q\Big|<(1+\varepsilon)|E|      $$
for every $E\in\Delta$ and $m\geq m_0$.
\end{lemma}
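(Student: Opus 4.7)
The plan is to reduce both inequalities to a single uniform estimate on the measure of those dyadic cubes that touch the boundary. Setting
$$R_m(E):=\bigcup\{Q\in W_m:Q\cap\partial E\ne\varnothing\},$$
I claim it suffices to show $|R_m(E)|<\varepsilon|E|$ for every $E\in\Delta$ and every sufficiently large $m$. Indeed, any cube $Q\in W_m$ with $Q\cap E\ne\varnothing$ either lies in $E$ or, being connected, must meet $\partial E$; and any point of $E$ not covered by some cube $Q\subset E$ lies in a cube of $W_m$ that meets $\partial E$. Hence
$$\Big|\bigcup_{Q\cap E\ne\varnothing}Q\Big|-|E|\le|R_m(E)|\quad\text{and}\quad |E|-\Big|\bigcup_{Q\subset E}Q\Big|\le|R_m(E)|,$$
from which both stated inequalities follow.

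To produce the estimate, set $V=\inf_{E\in\Delta}|E|>0$ and write $c_n$ for the volume of the unit Euclidean ball in $\mathbb{R}^n$. Apply the uniform Jordan measurability of $\Delta$ with tolerance $\varepsilon':=\varepsilon V/(2^n c_n)$ to obtain $k\in\mathbb{N}$ and $r>0$ with $kr^n<\varepsilon'$ such that for every $E\in\Delta$ there exist balls $B(x_1^E,r),\dots,B(x_k^E,r)$ covering $\partial E$. Choose $m_0\in\mathbb{N}^n$ so that $\diam Q<r$ for every $Q\in W_m$ whenever $m\ge m_0$ (e.g. take each coordinate of $m_0$ large enough that $2^{-(m_0)_i}<r/\sqrt n$).

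Now fix $m\ge m_0$ and $E\in\Delta$. If $Q\in W_m$ meets $\partial E$, choose $y\in Q\cap\partial E$; then $y$ lies in some $B(x_i^E,r)$ from the cover, and since $\diam Q<r$ this forces $Q\subset B(x_i^E,2r)$. Therefore
$$R_m(E)\subset\bigcup_{i=1}^k B(x_i^E,2r),$$
so $|R_m(E)|\le k\cdot c_n(2r)^n=2^n c_n\cdot kr^n<2^n c_n\varepsilon'=\varepsilon V\le\varepsilon|E|$, completing the reduction.

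The argument is essentially a uniform version of the standard dyadic approximation of Jordan-measurable sets, so I do not expect a real obstacle; the only delicate point is extracting the correct quantitative dependence from the definition of uniform Jordan measurability (the excerpt appears to use the symbol $\varepsilon$ for both the tolerance and the ball radius, so care is needed in naming parameters so that $V$ enters the tolerance $\varepsilon'$ in the right way).
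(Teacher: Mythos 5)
Your proof is correct and follows essentially the same route as the paper: cover $\partial E$ by $k$ balls of radius $r$ with $kr^n$ small relative to $\varepsilon\inf_{E\in\Delta}|E|$, take $m_0$ so that dyadic cubes of order $\geq m_0$ have diameter less than $r$, and control the cubes touching the boundary by the doubled balls (your $R_m(E)$ plays the role of the paper's family $A_E$). The only cosmetic difference is your explicit handling of the tolerance $\varepsilon'$ and the constant $c_n$, which also correctly resolves the paper's typo of using $\varepsilon$ for both the tolerance and the radius.
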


\begin{proof}
Denote $t=\inf\limits_{E\in\Delta}|E|$ and $V(x,r)=\{y\in\mathbb{R}^n:\dist(y,x)<r\}$. By virtue of the lemma condition there are $k\in\mathbb{N}$ and $r>0$ such that
\begin{equation}\label{eq:7-1}
    4^nkr^n<\varepsilon t,
\end{equation}
and for every $E\in\Delta$ we can choose points $x_{E,1},\dots,x_{E,k}$ for which
\begin{equation}\label{eq:7-2}
    \partial E\subset \bigcup_{j=1}^k V(x_{E,j},r).
\end{equation}

Let $m_0\in\mathbb{N}^n$ be such that $\diam Q<r$ if $Q\in W_{m_0}$. Suppose $m\geq m_0$. For $E\in\Delta$ denote
$$  A_E=\Big\{Q\in W_m:\;Q\cap\bigcup_{j=1}^k V(x_{E,j},r)\neq\varnothing\Big\}.       $$
By choosing of $m_0$ we have
$$  \bigcup_{Q \in A_E} Q\subset \bigcup_{j=1}^k V(x_{E,j},2r).      $$
Consequently, by \eqref{eq:7-1}
\begin{equation}\label{eq:7-3}
    \Big|\bigcup_{Q \in A_E} Q\Big|\leq 2^n\sum_{j=1}^k \big|V(x_{E,j},r)\big|\leq 2^nk2^nr^n<\varepsilon t.
\end{equation}
Note that if $Q\in W_m$, $Q\cap E\neq\varnothing$ and $Q\cap(\mathbb{R}^n\setminus E)\neq\varnothing$, then $Q\cap\partial E\neq\varnothing$. Therefore, by \eqref{eq:7-2},
$$  \big\{Q\in W_m:\;Q\subset E\big\}\supset \big\{Q\in W_m:\;Q\cap E\neq\varnothing\big\}\setminus A_E   $$
and
$$  \big\{Q\in W_m:\;Q\cap E\neq\varnothing\big\}\subset \big\{Q\in W_m:\;Q\subset E\big\}\cup A_E.     $$
Consequently, taking into account \eqref{eq:7-3}, we obtain
$$  \Big|\bigcup_{Q\in W_m,\,Q\subset E} Q\Big|\geq \Big|\bigcup_{Q\in W_m,\,Q\cap E\neq\varnothing} Q\Big|-\Big|\bigcup_{Q\in A_E} Q\Big|>
            |E|-\varepsilon t\geq (1-\varepsilon)|E|        $$
and
\[  \Big|\bigcup_{Q\in W_m,\,Q\cap E\neq\varnothing} Q\Big|\leq \Big|\bigcup_{Q\in W_m,\,Q\subset E} Q\Big|+\Big|\bigcup_{Q\in A_E} Q\Big|<
            |E|+\varepsilon t\leq (1+\varepsilon)|E|.        \qedhere       \]
\end{proof}

\begin{lemma}\label{lem:8}
Let $B$ be a translation invariant basis in $\mathbb{R}^n$. Then the family $\Delta(B)$ has $M_{\Phi_B}$-property.
\end{lemma}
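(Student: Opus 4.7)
The plan is to let $E$ be a small ball $V_r$: since balls are rotation invariant, a single dyadic approximation built around $V_r$ will serve every rotated basis $B(\gamma)$. Fix $h>1$ and $\varepsilon>0$; if $\Phi_B(h)=0$ the claim is vacuous, so assume $\Phi_B(h)>0$. Using the definition of $\Phi_B$ as $\lim_{t\to\infty}\olim_{r\to 0}|\{M_B^{(tr)}(h\chi_{V_r})>1\}|/|V_r|$, I would first choose $t$, depending only on $h$, so that the inner $\olim$ exceeds $\tfrac12\Phi_B(h)$, and then pick an arbitrarily small $r>0$ with $tr<\varepsilon$, $2r(1+t)\sqrt n<\varepsilon$, and $|S|\geq\tfrac14\Phi_B(h)|V_r|$, where $S:=\{M_B^{(tr)}(h\chi_{V_r})>1\}$. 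Set $E=V_r$ and take $Q$ to be the cube centred at $0$ of side $2r(1+t)$. Then $V_r\subset Q$, $\diam Q<\varepsilon$, and the ratio $|E|/|Q|$ depends only on $n$ and $t$ (hence only on $h$), providing the constant $c(h)\in(0,1)$.

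A change of variables using translation invariance of $B$ and rotation invariance of $V_r$ yields
\[ M_{B(\gamma)}^{(tr)}(h\chi_{V_r})(x)=M_B^{(tr)}(h\chi_{V_r})(\gamma^{-1}x), \]
so the super-level set $S_\gamma:=\{M_{B(\gamma)}^{(tr)}(h\chi_{V_r})>1\}$ equals $\gamma(S)$. In particular $|S_\gamma|=|S|$ and every $x\in S_\gamma$ lies within $tr$ of $V_r$, giving $S_\gamma\subset V_{r(1+t)}\subset Q$. Next I would observe that $S$ is open: for fixed $I_0\in B(0)$ the map $x\mapsto|V_r\cap(x+I_0)|$ is continuous by continuity of translation in $L^1$, so $M_B^{(tr)}(h\chi_{V_r})$ is lower semicontinuous as a supremum of continuous functions, and each $S_\gamma=\gamma(S)$ is also open.

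Finally I would pick a compact $K\subset S$ with $|K|\geq\tfrac12|S|$, set $\delta=\dist(K,\mathbb R^n\setminus S)>0$, and choose $m\in\mathbb N^n$ large enough that every $Q'\in W_m$ has $\diam Q'<\delta$. Because rotations are isometries, $K_\gamma:=\gamma(K)$ satisfies $\dist(K_\gamma,\mathbb R^n\setminus S_\gamma)=\delta$ for every $\gamma$, so defining
\[ P_{B(\gamma)}=\bigcup\{Q'\in W_m:Q'\cap K_\gamma\neq\emptyset\} \]
gives a union of cubes lying inside the open $\delta$-neighbourhood of $K_\gamma$, hence inside $S_\gamma$. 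This verifies all six conditions of the $M_{\Phi_B}$-property: $P_{B(\gamma)}\in H_m$, $P_{B(\gamma)}\subset S_\gamma\subset\{M_{B(\gamma)}^{(\varepsilon)}(h\chi_{V_r})>1\}$ (using $tr<\varepsilon$), $P_{B(\gamma)}\subset Q$, and $|P_{B(\gamma)}|\geq|K_\gamma|=|K|\geq\tfrac18\Phi_B(h)|E|$ with the absolute constant $c=1/8$.

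The main obstacle is obtaining the dyadic scale $m$ uniformly in $\gamma\in\Gamma_n$. This is exactly what lower semicontinuity of the translation-invariant maximal function provides: openness of $S$ gives a positive separation $\delta$ between an inner compact approximation and $\partial S$, and because rotations preserve distances, a single $m$ works simultaneously for every $B(\gamma)\in\Delta(B)$.
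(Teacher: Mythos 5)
Your proof is correct, and while its skeleton coincides with the paper's (same choice of $t$ and $r$ from the definition of $\Phi_B$, the identity $M_{B(\gamma)}^{(tr)}(h\chi_{{}_{V_r}})(x)=M_B^{(tr)}(h\chi_{{}_{V_r}})(\gamma^{-1}x)$ exploiting rotation invariance of the ball, $E=V_r$, the cube $Q$ of side $2r(1+t)$, and the resulting constants $c$, $c(h)$), you handle the crucial uniformity step differently. The paper takes a finite union of cubic intervals $A\subset S=\{M_B^{(tr)}(h\chi_{{}_{V_r}})>1\}$ with $|A|>\tfrac{1}{2}\Phi_B(h)|V_r|$ and then invokes Lemma \ref{lem:7} (uniform Jordan measurability of the mutually congruent sets $\gamma(A)$) to produce one dyadic order $m$ and inner dyadic approximations $P_\gamma\subset\gamma(A)$ with $|P_\gamma|>\tfrac12|\gamma(A)|$. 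You instead take a compact $K\subset S$ with $|K|\geq\tfrac12|S|$, use $\delta=\dist(K,\mathbb{R}^n\setminus S)>0$, and observe that rotations are isometries, so dyadic cubes of mesh $<\delta$ meeting $\gamma(K)$ stay inside $\gamma(S)$ and cover $\gamma(K)$; this yields the single scale $m$ directly and bypasses Lemma \ref{lem:7} altogether. Your route is more elementary and self-contained; the paper's route buys a reusable tool (Lemma \ref{lem:7} applies to any uniformly Jordan-measurable family, not just rotated copies of one open set's inner approximation). A genuine plus of your write-up is that you actually prove openness of $S$ via lower semicontinuity of $M_B^{(tr)}$, which holds precisely because $B$ is translation invariant (the paper merely asserts openness); both arguments hinge on it. Two harmless quibbles: the case $\Phi_B(h)=0$ is not ``vacuous'' (the six conditions must still be exhibited), but it never occurs since $V_r\subset S$ gives $\Phi_B(h)\geq 1$ for $h>1$; and, as in the paper, when $B(\gamma_1)=B(\gamma_2)$ as bases one should simply fix one of the candidate sets $P_\gamma$ for that basis, which changes nothing.
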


\begin{proof}
Let $h>1$ and $\varepsilon>0$. Take $t>1$ such that
$$  \olim_{r\to 0} \frac{|\{M_B^{(tr)}(h\chi_{{}_{V_r}})>1\}|}{|V_r|}>\frac{\Phi_B(h)}{2}\,.     $$

Let us consider $r>0$ for which
\begin{equation}\label{eq:8-1}
    2\sqrt{n}\,r(1+t)<\varepsilon \;\;\text{and}\;\; \big|\big\{M_B^{(tr)}(h\chi_{{}_{V_r}})>1\big\}\big|>\frac{\Phi_B(h)}{2}\,|V_r|.
\end{equation}

It is easy to check that for every $f\in L(\mathbb{R}^n)$, $\delta>0$, $x\in\mathbb{R}^n$ and $y\in \Gamma_n$,
$$  M_B^{(\delta)}(f)(x)=M_{B(\gamma)}^{(\delta)}(f\circ\gamma^{-1})(\gamma(x)).        $$
Therefore, we get
$$  M_B^{(tr)}(h\chi_{{}_{V_r}})(x)=M_{B(\gamma)}^{(tr)}(h\chi_{{}_{V_r}})(\gamma(x)) \;\; (x\in\mathbb{R}^n, \;\; \gamma\in\Gamma_n).      $$
Consequently,
\begin{equation}\label{eq:8-2}
    \big\{M_{B(\gamma)}^{(tr)}(h\chi_{{}_{V_r}})>1\big\}=\gamma\Big(\big\{M_B^{(tr)}(h\chi_{{}_{V_r}})>1\big\}\Big) \;\; (\gamma\in\Gamma_n).
\end{equation}

Since the set $\{M_B^{(tr)}(h\chi_{{}_{V_r}})>1\}$ is open, then there exists a set $A$ that is a finite union of cubic intervals such that
\begin{equation}\label{eq:8-3}
    A\subset \big\{M_B^{(tr)}(h\chi_{{}_{V_r}})>1\big\} \;\;\text{and}\;\; |A|>\frac{\Phi_B(h)}{2}\,|V_r|.
\end{equation}
Put $A_\gamma=\gamma(A)$ $(\gamma\in\Gamma_n)$. Since sets $A_\gamma$ are measurable in Jordan sense and mutually congruent, then by Lemma \ref{lem:7} we conclude the existence of $m\in\mathbb{N}^n$ and sets $P_\gamma$ $(\gamma\in\Gamma_n)$ such that
\begin{gather}
    P_\gamma\subset A_\gamma \;\;\text{and}\;\; |P_\gamma|>\frac{|A_\gamma|}{2} \;\;(\gamma\in\Gamma_n), \label{eq:8-4} \\
    \big\{P_\gamma:\;\gamma\in\Gamma_n\big\}\subset H_m. \label{eq:8-5}
\end{gather}
From \eqref{eq:8-1}--\eqref{eq:8-4} we have that for every $\gamma\in\Gamma_n$,
\begin{gather}
    P_\gamma\subset \big\{M_{B(\gamma)}^{(\varepsilon)}(h\chi_{{}_{V_r}})>1\big\}, \label{eq:8-6} \\
    |P_\gamma|>\frac{\Phi_B(h)}{4}\,|V_r|, \label{eq:8-7} \\
    V_r\cup\bigcup_{\gamma\in\Gamma_n} P_\gamma\subset V_{r(1+t)}. \label{eq:8-8}
\end{gather}
Assuming $c=1/4$, $c(h)=\frac{1}{2^n n^{n/2}(1+t)^n}$\,, $E=V_r$, $P_{B(\gamma)}=P_\gamma$ $(\gamma\in\Gamma_n)$ and $Q=(-r(1+t),r(1+t))^n$, from \eqref{eq:8-5}--\eqref{eq:8-8} and \eqref{eq:8-1} we conclude that the family $\Lambda=\{B(\gamma):\;\gamma\in\Gamma_n\}$ has $M_{\Phi_B}$-property.
\end{proof}

From Theorem \ref{th:1} on the basis of Lemma \ref{lem:8} we obtain the following result.

\begin{theorem}\label{th:2}
Let $B$ be a translation invariant basis in $\mathbb{R}^n$. If the function $\Phi_B$ is non-regular, then for every $f\in L\setminus[\Phi_B(L)](\mathbb{G}^n)$ there exists a measure preserving and invertible mapping $\omega:\mathbb{R}^n\to\mathbb{R}^n$ such that  $\{x:\omega(x)\neq x\}\subset \mathbb{G}^n$ and $|f|\circ\omega\in S_{\Delta(B)}$. In particular, if \;$\Phi_B$ additionally satisfies $\Delta_2$-condition at infinity, then the same conclusion is valid for every $f\in L\setminus \Phi_B(L)(\mathbb{G}^n)$.
\end{theorem}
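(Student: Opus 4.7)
The plan is to realize Theorem~\ref{th:2} as an immediate specialization of Theorem~\ref{th:1}, taking $\Lambda:=\Delta(B)=\{B(\gamma):\gamma\in\Gamma_n\}$ and $\Phi:=\Phi_B$. All the genuine work has already been packaged in Lemma~\ref{lem:8}, which provides exactly the combinatorial-measure-theoretic content needed to verify the $M_{\Phi_B}$-property for the rotated family; given that, the deduction reduces to checking that every hypothesis of Theorem~\ref{th:1} is in force.

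First I would check that every member of $\Lambda$ is indeed a translation invariant differentiation basis. This is immediate: for $\gamma\in\Gamma_n$, translation invariance of $B$ gives $B(x)=\{x+I_0:I_0\in B(0)\}$, whence
\[
B(\gamma)(x)=\{x+\gamma(I-x):I\in B(x)\}=\{x+\gamma(I_0):I_0\in B(0)\}=\{x+J:J\in B(\gamma)(0)\},
\]
so $B(\gamma)$ is translation invariant. The non-regularity of $\Phi_B$ is the standing hypothesis of Theorem~\ref{th:2}, while Lemma~\ref{lem:8} asserts that $\Delta(B)$ has the $M_{\Phi_B}$-property. All hypotheses of Theorem~\ref{th:1} are therefore satisfied, and its conclusion, applied to any $f\in L\setminus[\Phi_B(L)](\mathbb{G}^n)$, produces a measure preserving invertible $\omega:\mathbb{R}^n\to\mathbb{R}^n$ with $\{x:\omega(x)\neq x\}\subset\mathbb{G}^n$ and $|f|\circ\omega\in S_\Lambda=S_{\Delta(B)}$, which is exactly the first assertion.

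For the second assertion, I would invoke the elementary observation recorded just after the definition of $[\Phi(L)]$ in Section~\ref{sec:1}: if $\Phi_B$ satisfies the $\Delta_2$-condition at infinity, then $[\Phi_B(L)](\mathbb{G}^n)=\Phi_B(L)(\mathbb{G}^n)$. Hence the exceptional class $L\setminus[\Phi_B(L)](\mathbb{G}^n)$ from the first part coincides with $L\setminus\Phi_B(L)(\mathbb{G}^n)$, and the stronger statement follows at once. There is essentially no obstacle in this deduction beyond verifying the translation invariance of each $B(\gamma)$; the substantive step, namely the $M_{\Phi_B}$-property of $\Delta(B)$, has already been carried out in Lemma~\ref{lem:8}.
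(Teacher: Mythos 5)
Your proposal is correct and matches the paper's own derivation: Theorem~\ref{th:2} is obtained there precisely by applying Theorem~\ref{th:1} to $\Lambda=\Delta(B)$ with $\Phi=\Phi_B$, using Lemma~\ref{lem:8} to supply the $M_{\Phi_B}$-property. Your added checks (translation invariance of each $B(\gamma)$ and the identification $[\Phi_B(L)](\mathbb{G}^n)=\Phi_B(L)(\mathbb{G}^n)$ under the $\Delta_2$-condition) are exactly the routine verifications the paper leaves implicit.
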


\begin{corollary}\label{cor:1}
Let $B$ be a translation invariant basis in $\mathbb{R}^n$. If the function $\Phi_B$ is non-regular, then the class $S_{\Delta(B)}$ is non-empty.
\end{corollary}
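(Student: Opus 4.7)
The corollary should be essentially immediate from Theorem~\ref{th:2} together with the observation about the class $L\setminus[\Phi(L)](\mathbb{G}^n)$ recorded at the end of Section~\ref{sec:1}. My plan is therefore just to assemble these two ingredients.

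First I would invoke item~2) from the bulleted remarks in Section~\ref{sec:1}, which asserts that $L\setminus[\Phi(L)](\mathbb{G}^n)$ is non-empty if and only if $\Phi$ is non-regular. Since the hypothesis of the corollary is precisely that $\Phi_B$ is non-regular, this gives us a witness $f\in L\setminus[\Phi_B(L)](\mathbb{G}^n)$, which is the input Theorem~\ref{th:2} demands.

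Next I would apply Theorem~\ref{th:2} to this $f$. The theorem supplies a measure preserving and invertible mapping $\omega:\mathbb{R}^n\to\mathbb{R}^n$ with $\{x:\omega(x)\neq x\}\subset\mathbb{G}^n$ such that $|f|\circ\omega\in S_{\Delta(B)}$. Since $\omega$ is measure preserving and $f\in L(\mathbb{R}^n)$, the composition $|f|\circ\omega$ lies in $L(\mathbb{G}^n)$, so it is a legitimate element of $S_{\Delta(B)}$. This exhibits a single member of $S_{\Delta(B)}$, so the class is non-empty.

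There is no real obstacle here; the only point where one has to be slightly careful is the logical dependence, namely that non-regularity of $\Phi_B$ is used twice (once to guarantee a suitable $f$ exists via item~2) of Section~\ref{sec:1}, and once as a hypothesis of Theorem~\ref{th:2}), which is consistent because both invocations are of the same function $\Phi_B$. The whole proof therefore collapses to two lines.
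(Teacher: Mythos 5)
Your proposal is correct and matches the paper's (implicit) argument: the corollary is stated as an immediate consequence of Theorem~\ref{th:2}, using exactly the observation from Section~\ref{sec:1} that non-regularity of the increasing function $\Phi_B$ guarantees $L\setminus[\Phi_B(L)](\mathbb{G}^n)\neq\varnothing$, so that Theorem~\ref{th:2} produces an element $|f|\circ\omega\in S_{\Delta(B)}$. Nothing further is needed.
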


\medskip
\medskip

\noindent \textbf{5.2.} It is true the following theorem.

\begin{theorem}\label{th:3}
Let $B$ be a translation invariant basis in $\mathbb{R}^n$. If the function $\Phi_B$ is non-regular, then for every Orlicz space $\psi(L)(\mathbb{G}^n)$ with the properties: $\psi$ satisfies $\Delta_2$-condition at infinity and $\lim\limits_{h\to\infty} \frac{\psi(h)}{\Phi_B(h)}=0$, the set $\psi(L)(\mathbb{G}^n)\setminus S_{\Delta(B)}$ is of the first category in $\psi(L)(\mathbb{G}^n)$.
\end{theorem}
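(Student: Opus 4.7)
The plan is a Baire category argument: I will show that $S_{\Delta(B)}\cap\psi(L)(\mathbb{G}^n)$ contains a dense $G_\delta$ subset of $\psi(L)(\mathbb{G}^n)$, from which the first category of the complement follows immediately.

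For density, given $f\in\psi(L)$ and $\varepsilon>0$, I first truncate to $f_0\in L^\infty\cap\psi(L)$ with $\|f-f_0\|_\psi<\varepsilon/2$, which is possible because $\psi$ satisfies $\Delta_2$. Since $\psi/\Phi_B\to 0$, some element of $\psi(L)(\mathbb{G}^n)$ lies outside $[\Phi_B(L)](\mathbb{G}^n)$, and Theorem~\ref{th:2} produces $g_0\in S_{\Delta(B)}\cap\psi(L)$. Positive scaling preserves $S_{\Delta(B)}$ and $\|\lambda g_0\|_\psi\to 0$ as $\lambda\to 0^+$ (again by $\Delta_2$), so I choose $\lambda$ with $\|\lambda g_0\|_\psi<\varepsilon/2$. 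Since $f_0$ is bounded, the inequality $\overline{D}_{B(\gamma)}(\int(f_0+\lambda g_0),x)\geq\overline{D}_{B(\gamma)}(\int\lambda g_0,x)-\|f_0\|_\infty=\infty$ holds almost everywhere for every $\gamma$, giving $f_0+\lambda g_0\in S_{\Delta(B)}$ within $\varepsilon$ of $f$.

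For the $G_\delta$ structure, for $K\in\mathbb{N}$ and rational $\delta>0$ I set
\[
V_{K,\delta}=\Big\{f\in\psi(L):\forall\gamma\in\Gamma_n,\;\big|\{x\in\mathbb{G}^n:\exists R\in B(\gamma)(x),\diam R<1/K,\tfrac{1}{|R|}\textstyle\int_R f>K+\delta\}\big|>1-2^{-K}+\delta\Big\},
\]
and $U_K=\bigcup_{\delta\in\mathbb{Q}_{>0}}V_{K,\delta}$. The inclusion $S_{\Delta(B)}\subset V_{K,\delta}$ whenever $\delta<2^{-K}$, combined with the density from the previous paragraph, yields density of $U_K$. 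The built-in $\delta$-margin provides openness: for $f\in V_{K,\delta}$ and $h\in\psi(L)$ with $|\{M_{B(\gamma)}^{(1/K)}(|h|)>\delta/2\}|<\delta/2$ for every $\gamma$, the pointwise estimate $\frac{1}{|R|}\int_R(f+h)>K+\delta/2$ on the relevant intersection shows $f+h\in V_{K,\delta/2}\subset U_K$. A Borel--Cantelli argument using $\sum 2^{-K}<\infty$ then gives $\bigcap_K U_K\subset S_{\Delta(B)}$, producing the desired dense $G_\delta$.

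The central obstacle is the uniform-in-$\gamma$ bound $|\{M_{B(\gamma)}^{(1/K)}(|h|)>\delta/2\}|\to 0$ as $\|h\|_\psi\to 0$, required for openness. The rotation identity $M_{B(\gamma)}h=M_B(h\circ\gamma)\circ\gamma^{-1}$ together with the isometric invariance $\|h\circ\gamma\|_\psi=\|h\|_\psi$ shows that this distribution-function quantity is independent of $\gamma$, reducing the problem to a single continuity-at-zero statement for $M_B:\psi(L)(\mathbb{G}^n)\to L^0$. The subcritical growth hypothesis $\psi/\Phi_B\to 0$ is precisely what should enable such continuity, via a Calderón--Zygmund-type decomposition exploiting that, by Lemma~\ref{lem:8}, $\Phi_B$ governs the distributional behaviour of $M_B$ on characteristic functions.
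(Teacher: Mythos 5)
Your density step is essentially the paper's own argument (truncate via $\Delta_2$, add a small-norm element of $S_{\Delta(B)}$ supplied by Theorem~\ref{th:2}, use boundedness of the truncation), and it is fine modulo the routine verification that $\psi(L)(\mathbb{G}^n)\setminus[\Phi_B(L)](\mathbb{G}^n)\neq\varnothing$ (which needs the $\Delta_2$-condition on $\psi$, not just $\psi/\Phi_B\to 0$). The genuine gap is in the openness step of your $G_\delta$ construction. The lemma you defer to the end --- that $\|h\|_{\psi(L)}\to 0$ forces $|\{M_{B(\gamma)}^{(1/K)}(|h|)>\delta/2\}|\to 0$ uniformly in $\gamma$ --- is not merely unproved; it is false under the very hypotheses of the theorem. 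Indeed, by Theorem~\ref{th:2} (which you invoke in the density step) there exist nonnegative $\ell\in S_{\Delta(B)}\cap\psi(L)(\mathbb{G}^n)$ of arbitrarily small $\psi(L)$-norm, and for such $\ell$ one has $\overline{D}_{B(\gamma)}\big(\int \ell,x\big)=\infty$ a.e.\ on $\mathbb{G}^n$, hence $M_{B(\gamma)}^{(1/K)}(\ell)=\infty$ a.e.\ and $|\{M_{B(\gamma)}^{(1/K)}(\ell)>\delta/2\}|=1$ no matter how small $\|\ell\|_{\psi(L)}$ is. More structurally, the hypothesis $\psi/\Phi_B\to 0$ together with the $M_{\Phi_B}$-property (Lemma~\ref{lem:8}) produces functions $h\chi_{{}_E}$ with modular $\psi(h)|E|=\psi(h)/\Phi_B(h)\to 0$ while $|\{M_B^{(\varepsilon)}(h\chi_{{}_E})>1\}|\gtrsim\Phi_B(h)|E|$ stays bounded below; so the subcritical growth condition is exactly what destroys, rather than enables, the weak continuity at zero of $M_B$ on $\psi(L)$. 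Without that lemma your sets $U_K$ have no reason to be open, and the dense-$G_\delta$ scheme collapses at its crucial point.

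The paper's proof circumvents precisely this obstruction by working on the complement with ``bad'' sets defined through upper bounds: $E_k$ consists of those $f$ for which there exist a rotation $\gamma$ and a set $A$ with $|A|\geq 1/k$ such that $\frac{1}{|R|}\int_R f\leq k$ for all $x\in A$ and $R\in B(\gamma)(x)$ with $\diam R\leq 1/k$; then $\psi(L)\setminus S_{\Delta(B)}=\bigcup_k E_k$. Closedness of each $E_k$ needs no maximal-operator estimate at all: one passes to a convergent subsequence of rotations (compactness of $\Gamma_n$), takes $A=\olim_j A(f_j)$, and uses that the average over a fixed set $R$ is an $L^1$-continuous functional together with $|(R_j\setminus R)\cup(R\setminus R_j)|\to 0$ for $R_j=x+\gamma(f_j)(T)$. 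The inequality ``$\leq k$'' survives these limits, whereas your condition ``some small $R$ has average $>K+\delta$ on a large set'' does not survive small perturbations uniformly in $x$ and $\gamma$ --- that is exactly where you needed the false lemma. Combining closedness of $E_k$ with the density of $S_{\Delta(B)}$ (your correct half) gives that each $E_k$ is nowhere dense, hence the first-category conclusion. If you want to salvage your write-up, replace the $U_K$/openness argument by this complement/closedness argument; the rest of your proposal can stay.
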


\begin{remark}\label{rem:2}
Theorem \ref{th:3} generalizes the result of $B$. L\'{o}pes Melero \cite{9} which asserts the same for the case when the following weak variant of the function $\Phi_B$ is considered
$$  \widetilde{\Phi}_B(h)=\olim_{r\to 0} \frac{|\{M_B^{(hr)}(h\chi_{{}_{V_r}})>1\}|}{|V_r|}\,.      $$
\end{remark}

\begin{proof}[Proof of Theorem $\ref{th:3}$]
For $k\in\mathbb{N}$ by $E_k$ denote the set of all functions $f\in \psi(L)(\mathbb{G}^n)$ for which there is a set $A=A(f)\subset \mathbb{G}^n$ and a rotation $\gamma=\gamma(f)\in\Gamma_n$ with the properties:
\begin{itemize}
\item[1)] $|A|\geq\frac{1}{k}$\,;

\item[2)] is $x\in A$, $R\in B(\gamma)(x)$ and $\diam R\leq\frac{1}{k}$, then $\frac{1}{|R|}\int\limits_R f\leq k$.
\end{itemize}

It is easy to see that $\Psi(L)\setminus S_{\Delta(B)}=\bigcup\limits_{k=1}^\infty E_k$. Therefore it sufficies to prove that $E_k$ is nowhere dense in $\psi(L)$ for every $k\in\mathbb{N}$.

Let $k\in\mathbb{N}$. First let us prove the closeness of $E_k$. Suppose $f_j\in E_k$ $(j\in\mathbb{N})$, $f\in\psi(L)$ and $\|f_j-f\|_{\psi(L)}\to 0$. Clearly, we can choose subsequence of $(\gamma(f_j))$ which is convergent by the natural metric in $\Gamma_n$. Without loss of generality assume that $\gamma(f_j)$ converges and let $\gamma$ be its limit. By $A$ denote the set $\olim\limits_{j\rightarrow\infty} A(f_j)$. Obviously, $A\subset \mathbb{G}^n$ and $|A|\geq\frac{1}{k}$\,. Take $x\in A$ and $R\in B(\gamma)(x)$ with $\diam R<\frac{1}{k}$\,. Without loss of generality assume that $x\in A(f_j)$ for every $j\in\mathbb{N}$. Let us consider the set $T\in B(0)$ for which $R=x+\gamma(T)$. Put $R_j=x+\gamma(f_j)(T)$ $(j\in\mathbb{N})$. Then $R_j\in B(\gamma(f_j))(x)$ and $\diam R_j\leq\frac{1}{k}$\,. Consequently,
$$  \frac{1}{|R_j|}\int\limits_{R_j} f\leq k.       $$
Now taking into account that $|(R_j\setminus R)\cup(R\setminus R_j)|\to 0$ and $\|f_j-f\|_L\leq c\|f_j-f\|_{\psi(L)}\rightarrow 0$, we obtain
$$  \frac{1}{|R|}\int\limits_{R} f=\lim_{j\to\infty} \frac{1}{|R_j|}\int\limits_{R_j} f_j\leq k.       $$

Thus the closeness of $E_k$ is proved. The next step is to prove that $\psi(L)\setminus E_k$ is dense in $\psi(L)$. Take a function $f\in\psi(L)$ and $\varepsilon>0$. Since $\psi$ satisfies $\Delta_2$-condition at infinity, then (see e.g. \cite[\S~4]{10}) there is $g\in L^\infty$ with $\|f-g\|_{\psi(L)}<\varepsilon/2$\,. Let us consider a function $\ell\in S_{\Delta(B)}$ with $\|\ell\|_{\psi(L)}<\varepsilon/2$\,. Existence of such function is provided by Theorem \ref{th:2}. Obviously, $g+\ell\in S_{\Delta(B)}$. Now from the estimation $\|f-(g+\ell)\|_{\psi(L)}<\varepsilon$ we conclude the density of $S_{\Delta(B)}$ in $\psi(L)$. Consequently, using inclusion $S_{\Delta(B)}\subset \psi(L)\setminus E_k$ we obtain the density of $\psi(L)\setminus E_k$ in $\psi(L)$. Finally, taking into account the closeness of $E_k$ we conclude that $E_k$ is nowhere dense in $\psi(L)$.
\end{proof}

\noindent \textbf{5.3.} In this section we will apply Theorems 2 and 3 for bases $\bold{I}_n^k$.

\begin{lemma}\label{lem:9}
Let $\delta_1,\dots,\delta_n>0$, $h>1$, and let $E$ be the set of all points $x\in\mathbb{R}^n$ such that
\begin{gather*}
    x_1>\delta_1,\dots,x_n>\delta_n, \\
    x_1\cdots x_n<h\delta_1\cdots\delta_n.
\end{gather*}
Then
$$  \int\limits_E \frac{1}{x_1\cdots x_n}\,dx_1\cdots dx_n>c(\ln h)^n,     $$
where $c$ is a positive number depending only on $n$.
\end{lemma}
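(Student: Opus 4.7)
The plan is to reduce the integral to the volume of a standard simplex by a logarithmic substitution. Specifically, I would introduce new variables $t_i = \ln(x_i/\delta_i)$, i.e. the change of variables $x_i = \delta_i e^{t_i}$ for $i=1,\dots,n$. The Jacobian gives $dx_i = \delta_i e^{t_i}\,dt_i = x_i\,dt_i$, so the integrand transforms neatly:
\[
    \frac{dx_1 \cdots dx_n}{x_1 \cdots x_n} = dt_1 \cdots dt_n.
\]

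Next I would translate the defining inequalities of $E$ into conditions on $t_1,\dots,t_n$. The inequalities $x_i > \delta_i$ become simply $t_i > 0$, and the product condition $x_1 \cdots x_n < h\,\delta_1 \cdots \delta_n$ becomes
\[
    \delta_1 \cdots \delta_n \, e^{t_1 + \cdots + t_n} < h\,\delta_1 \cdots \delta_n,
\]
equivalently $t_1 + \cdots + t_n < \ln h$. Hence $E$ is mapped bijectively onto the open simplex
\[
    \Sigma = \bigl\{ (t_1,\dots,t_n) : t_i > 0,\ t_1 + \cdots + t_n < \ln h \bigr\}.
\]

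Therefore the integral reduces to $|\Sigma|$, the $n$-dimensional Lebesgue measure of $\Sigma$, which is the standard simplex of side $\ln h$ and has volume $(\ln h)^n/n!$. Choosing for instance $c = 1/(2 \cdot n!)$ yields the desired strict inequality. There is no substantive obstacle here — the argument is a one-line change of variables followed by the well-known simplex volume formula, and the constant $c$ depends only on $n$ as required.
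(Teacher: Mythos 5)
Your proof is correct, and it takes a genuinely different route from the paper. You make the exponential substitution $x_i=\delta_i e^{t_i}$, which turns the integrand into Lebesgue measure and maps $E$ exactly onto the simplex $\{t_i>0,\ t_1+\cdots+t_n<\ln h\}$, giving the \emph{exact} value $(\ln h)^n/n!$ for the integral; any $c<1/n!$, e.g.\ $c=1/(2\,n!)$, then yields the strict inequality, and the constant depends only on $n$. The paper instead argues by induction on $n$ via Fubini's theorem: it slices in the last variable, applies the inductive bound to the $(n-1)$-dimensional section $A_t$, and then discards part of the range of integration (restricting to $\delta_n<t<\sqrt{h}\,\delta_n$, where $\ln\frac{h\delta_n}{t}\ge\frac12\ln h$) to extract a lower bound of the form $c_n(\ln h)^n$ with a constant that degrades by a factor at each step. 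Your argument is shorter, avoids induction entirely, and gives the sharp constant $1/n!$, which is strictly more information than the lemma requires; the paper's Fubini-slicing argument buys nothing extra here beyond staying in the same elementary toolkit it uses in Lemma 10. Either proof fully establishes the statement.
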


\begin{proof}
For $n=1$ the assertion is obvious. Let us consider the passing from $n-1$ to $n$.

For $\delta_n<t<h\delta_n$ denote
$$  A_t=\Big\{x\in\mathbb{R}^{n-1}:\;x_1>\delta_1,\dots,x_{n-1}>\delta_{n-1},\;x_1\cdots x_{n-1}<\frac{h\delta_n}{t}\,\delta_1\cdots\delta_{n-1}\Big\}.   $$
By Fubini theorem and the induction assumption we have
\begin{multline*}
    \int\limits_E \frac{1}{x_1\cdots x_n}\,dx_1\cdots dx_n= \\
    =\int\limits_1^{h\delta_n} \frac{1}{x_n}\,\bigg[\int\limits_{A_t} \frac{1}{x_1\cdots x_{n-1}}\,dx_1\cdots dx_{n-1}\bigg]\,dx_n> \\
    >\int\limits_{\delta_n}^{h\delta_n} c_{n-1}\,\frac{1}{t}\,\Big(\ln\frac{h\delta_n}{t}\Big)^{n-1}\,dt>
            c_{n-1}\int\limits_{\delta_n}^{\sqrt{h}\,\delta_n} \frac{1}{t}\,(\ln\sqrt{h})^{n-1}\,dt\geq c_n(\ln h)^n. \qedhere
\end{multline*}
\end{proof}

\begin{lemma}\label{lem:10}
For every $k$, $1\leq k\leq n-1$, and an interval $I$ of type
$$  I=\underset{j=1}{\overset{n}{\times}} (a_j,a_j+\delta_j), \;\;\text{where}\;\; \delta_k=\delta_{k+1}\cdots=\delta_n,      $$
and $h>1$ it is valid the estimation
$$  \big|\big\{M_{\bold{I}_n^k}(h\chi_{{}_I})>1\big\}\big|\geq ch(\ln h)^{k}|I|,     $$
where $c$ is a positive number depending only on $n$.
\end{lemma}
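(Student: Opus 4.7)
The plan is a Busemann--Feller-style construction tailored to the basis $\mathbf{I}_n^k$: I will exhibit, for each $x$ in an explicit ``large'' set $E$, a single witness rectangle $R_x\in\mathbf{I}_n^k(x)$ for which $h|R_x\cap I|/|R_x|>1$, and then estimate $|E|$ via Fubini and Lemma~\ref{lem:9}. By translation invariance of $\mathbf{I}_n^k$ and of Lebesgue measure I may assume $a_j=0$; write $\delta=\delta_k=\cdots=\delta_n$, so that $|I|=\delta_1\cdots\delta_{k-1}\delta^{n-k+1}$.

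For $x=(x_1,\ldots,x_n)$ satisfying $x_j>\delta_j$ for $j\leq k-1$ and $x_j>\delta$ for $j\geq k$, set $s_x=\max(x_k,\ldots,x_n)$ and define the witness
\[
R_x=\prod_{j=1}^{k-1}(0,x_j)\,\times\,(0,s_x)^{n-k+1}.
\]
This rectangle uses only the $k$ edge-lengths $x_1,\ldots,x_{k-1},s_x$, so $R_x\in\mathbf{I}_n^k$; a harmless $\varepsilon$-enlargement makes $x\in R_x$; and since each $x_j\geq\delta_j$ and $s_x\geq\delta$, one has $R_x\supset I$ and so $|R_x\cap I|=|I|$. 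The witness inequality collapses to the single polynomial constraint
\[
\prod_{j=1}^{k-1}x_j\cdot s_x^{n-k+1}<h|I|.
\]
Let $E$ denote the set of $x$ satisfying the quadrant conditions together with this constraint; then $E\subset\{M_{\mathbf{I}_n^k}(h\chi_I)>1\}$.

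I will estimate $|E|$ from below by integrating the last $n-k+1$ coordinates first. For fixed $(x_1,\ldots,x_{k-1})$ put $S=(h|I|/\prod_jx_j)^{1/(n-k+1)}$; the admissible slice is the cube $(\delta,S)^{n-k+1}$ of volume $(S-\delta)^{n-k+1}$. Restricting further to $S\geq 2\delta$, equivalently $\prod x_j\leq h\prod\delta_j/2^{n-k+1}$, the slice volume is bounded below by $(S/2)^{n-k+1}=h|I|/(2^{n-k+1}\prod x_j)$, and the problem reduces to controlling
\[
\int\frac{dx_1\cdots dx_{k-1}}{x_1\cdots x_{k-1}}
\]
over $\{x_j>\delta_j,\ \prod x_j<h\prod\delta_j/2^{n-k+1}\}$. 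This is exactly the setting of Lemma~\ref{lem:9} in dimension $k-1$, which supplies the required logarithmic factor and closes the estimate with a constant depending only on $n$.

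The only non-mechanical step, and the one place the hypothesis $\delta_k=\cdots=\delta_n$ really bites, is the geometric choice of $R_x$: it must simultaneously (i) have at most $k$ distinct edge-lengths, (ii) contain $x$, and (iii) cover $I$. Bundling the last $n-k+1$ coordinates of $R_x$ into a single block of common length $s_x$ is what keeps $R_x\in\mathbf{I}_n^k$; without the equal-edge structure on those coordinates the natural witness would demand $n-k+1$ independent edge-lengths and violate the $\mathbf{I}_n^k$ restriction. Once the witness is in hand, the remainder is the routine logarithmic volume computation already packaged in Lemma~\ref{lem:9}.
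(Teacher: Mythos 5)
Your witness construction is internally consistent, but it does not deliver the exponent in the statement: with $R_x=\prod_{j=1}^{k-1}(0,x_j)\times(0,s_x)^{n-k+1}$ the resonance set is parametrized by only the $k-1$ free coordinates $x_1,\dots,x_{k-1}$, and your own reduction lands in Lemma \ref{lem:9} in dimension $k-1$, which yields $c\,h(\ln h)^{k-1}|I|$ --- one logarithm short of the claimed $c\,h(\ln h)^{k}|I|$. The sentence asserting that Lemma \ref{lem:9} in dimension $k-1$ ``supplies the required logarithmic factor'' is exactly where the gap sits. The extra logarithm in the paper's argument comes from the step you discarded: the paper's witness is $J=(0,x_1)\times\cdots\times(0,x_k)\times\big(0,\max(x_{k+1},\dots,x_n)\big)^{n-k}$, with all of $x_1,\dots,x_k$ varying freely, so Fubini plus Lemma \ref{lem:9} is applied in dimension $k$ and produces $(\ln h)^k$. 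You are right that this $J$ has up to $k+1$ distinct edge lengths and hence is not admissible for $\mathbf{I}_n^{k}$; but the conclusion to draw is that the printed lemma carries an off-by-one slip in the basis index (what the proof establishes is the estimate for $M_{\mathbf{I}_n^{k+1}}$), not that the stated bound can be reached with a $k$-valued witness. Indeed, for large $h$ the inequality $|\{M_{\mathbf{I}_n^{k}}(h\chi_{I})>1\}|\ge ch(\ln h)^{k}|I|$ is incompatible with the weak-type estimate $|\{M_{\mathbf{I}_n^{k}}(f)>\lambda\}|\le c\int\frac{|f|}{\lambda}\big(1+\ln\frac{|f|}{\lambda}\big)^{k-1}$ quoted in the proof of Lemma \ref{lem:11}; and both the range $1\le k\le n-1$ (while $\mathbf{I}_n^{k}$ is defined for $2\le k\le n$) and the fact that Lemma \ref{lem:11} invokes this lemma at index $k-1$ to get the exponent $k-1$ point to $\mathbf{I}_n^{k+1}$ as the intended basis.

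So what you have actually proved is the weaker (and correct) bound $|\{M_{\mathbf{I}_n^{k}}(h\chi_{I})>1\}|\ge c\,h(\ln h)^{k-1}|I|$, which happens to be all that the application in Lemma \ref{lem:11} uses, but it is not the statement as posed. To prove the intended lemma, keep the $k$ free lengths $x_1,\dots,x_k$ together with the single bundled length $\max(x_{k+1},\dots,x_n)$ (so $k+1$ values, admissible for $\mathbf{I}_n^{k+1}$), take the slice radius $S=\big(h|I|/(x_1\cdots x_k)\big)^{1/(n-k)}$ over the last $n-k$ coordinates, and apply Lemma \ref{lem:9} in dimension $k$; the rest of your computation then goes through verbatim and gives the $(\ln h)^{k}$ factor.
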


\begin{proof}
Without loss of generality assume that $h>2^n$ and $I$ is of type
$$  I=(0,\delta_1)\times\cdots\times(0,\delta_k)\times(0,\delta_k)\times\cdots\times(0,\delta_{k}).       $$
Let $x\in\mathbb{R}^n$ be such that
\begin{gather}
    x_1> \delta_1,\cdots, x_k>\delta, \;\; x_{k+1}> \delta_k,\dots,x_n>\delta_{k}, \label{eq:10-1} \\
    x_{1}\dots x_k \big[\max(x_{k+1},\dots, x_n)\big]^{n-k} <h|I|. \label{eq:10-2}
\end{gather}
Put
$$  J=(0,x_{1})\times\cdots\times(0,x_k)\times\big(0,\max(x_{k+1},\dots,x_n)\big)^{n-k}. $$
By \eqref{eq:10-1} and \eqref{eq:10-2}, $J\supset I$ and $|J|<h|I|$. Consequently,
$$  \frac{1}{|J|} \int\limits_J h\chi_{{}_I}=\frac{h|I|}{|J|}>1.        $$
Thus $M_{\textbf{I}_n^k}(h\chi_{{}_I})(x)>1$, and therefore
$$  \big\{M_{\textbf{I}_n^k}(h\chi_{{}_I})(x)>1\big\}\supset\big\{x\in\mathbb{R}^n:\;x\;\text{satisfies \eqref{eq:10-1} and \eqref{eq:10-2}}\big\}\equiv E. $$

Let us estimate $|E|$. Denote
\begin{align*}
    T & =\Big\{y\in\mathbb{R}^{k}:\;y_1>\delta_1,\dots,y_{k}>\delta_{k},\;y_1\cdots y_{k}<\frac{h}{2^k}\,\delta_1\cdots\delta_{k}\Big\}, \\
    E_y & =\Big\{x\in E:\;(x_{1},\dots,x_k)=y\Big\} \;\; (y\in T).
\end{align*}
It is easy to see that for every $y\in T$,
$$  |E_y|_{n-k}=\Big(\Big(\frac{R|I|}{y_1\cdots y_{k}}\Big)^{1/(n-k)}-\delta\Big)^{n-k}>\frac{1}{2^n}\,\frac{R|I|}{y_1\cdots y_{k}}\,.  $$
Therefore, using Fubini theorem we have
\begin{align*}
    |E| & >\big|\big\{x\in E:\;(x_{1},\dots,x_k)\in T\big\}\big|= \\
    & =\int\limits_T |E_y|_{n-k}\,dy>\frac{h|I|}{2^n}\int\limits_T \frac{1}{y_1\cdots y_{k}}\,dy_1\cdots dy_{k}.
\end{align*}
Consequently, by virtue of Lemma \ref{lem:10} we conclude the validity of the needed estimation.
\end{proof}

\begin{lemma}\label{lem:11}
For every $k$ with $2\leq k\leq n$ there are valid the estimations
$$  c_1h(\ln h)^{k-1}\leq\Phi_{\bold{I}_n^k}(h)\leq c_2h(\ln h)^{k-1} \;\; (h>1),      $$
where $c_1$ and $c_2$ are positive numbers depending only on $n$.
\end{lemma}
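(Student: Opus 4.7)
The plan is to prove the two estimates independently, reducing both to computations on a standard cube via the translation-homothecy invariance of $\textbf{I}_n^k$. Combining translation invariance with homothecy invariance (both direct from the definition of $\textbf{I}_n^k$) gives
\[
\Phi_{\textbf{I}_n^k}(h) = |\{M_{\textbf{I}_n^k}(h\chi_V) > 1\}|,
\]
where $V$ is the open unit ball, by the simplification recorded in Section 5.1. Both bounds then come from comparing $V$ with an inscribed or circumscribed cube.

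For the lower estimate $\Phi_{\textbf{I}_n^k}(h) \geq c_1 h(\ln h)^{k-1}$, I would apply Lemma \ref{lem:10} with the index $k-1$ in place of $k$. The hypothesis $2 \leq k \leq n$ gives $1 \leq k-1 \leq n-1$, which is exactly the range required by Lemma \ref{lem:10}. Take $I$ to be a cube of side $2/\sqrt{n}$ inscribed in $V$; all its edges coincide, so the structural condition $\delta_{k-1} = \delta_k = \cdots = \delta_n$ of Lemma \ref{lem:10} is trivially met. The lemma then gives
\[
|\{M_{\textbf{I}_n^{k-1}}(h\chi_I) > 1\}| \geq c h(\ln h)^{k-1}|I|.
\]
Since $\textbf{I}_n^{k-1} \subset \textbf{I}_n^k$ yields $M_{\textbf{I}_n^{k-1}} \leq M_{\textbf{I}_n^k}$, and $I \subset V$ yields $h\chi_I \leq h\chi_V$, this level set embeds into $\{M_{\textbf{I}_n^k}(h\chi_V) > 1\}$. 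As $|I|$ depends only on $n$, we conclude $\Phi_{\textbf{I}_n^k}(h) \geq c_1 h(\ln h)^{k-1}$.

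For the upper estimate $\Phi_{\textbf{I}_n^k}(h) \leq c_2 h(\ln h)^{k-1}$, I would circumscribe a cube $Q \supset V$ of comparable volume and reduce matters to showing $|\{M_{\textbf{I}_n^k}(h\chi_Q) > 1\}| \leq c h(\ln h)^{k-1} |Q|$. The natural tool is the weak-type Orlicz bound
\[
|\{M_{\textbf{I}_n^k} f > \lambda\}| \leq C \int \frac{|f|}{\lambda}\Big(1 + \log^+ \tfrac{|f|}{\lambda}\Big)^{k-1},
\]
which applied to $f = h\chi_Q$ and $\lambda = 1$ yields the desired bound directly. To establish the inequality, one decomposes $\textbf{I}_n^k$ into finitely many subfamilies indexed by ordered partitions of $\{1, \ldots, n\}$ into at most $k$ blocks of coordinates carrying equal edge length; the maximal operator for each subfamily is a $k$-parameter strong maximal function (one parameter per block), to which the Jessen-Marcinkiewicz-Zygmund iteration applies and produces the $L(\log L)^{k-1}$ weak-type estimate. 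The principal technical obstacle is precisely this intermediate weak-type inequality: the extremal case $k = n$ is the classical strong maximal theorem, while the cases $2 \leq k \leq n-1$ require the block-partition argument outlined above.
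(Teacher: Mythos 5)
Your proposal is correct and follows essentially the same route as the paper: the lower bound comes from Lemma \ref{lem:10} applied with index $k-1$ to a cube inscribed in the ball (using monotonicity of the maximal operators and $|Q|\geq c|V_r|$), and the upper bound comes from the classical weak-type $L(\log L)^{k-1}$ inequality for $M_{\mathbf{I}_n^k}$. The only difference is cosmetic: the paper simply cites that weak-type estimate from de Guzm\'an's book, whereas you sketch its standard proof via the block-partition decomposition and Jessen--Marcinkiewicz--Zygmund iteration, which is a valid derivation of the same cited fact.
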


\begin{proof}
Let $r>0,$ $h>1$ and $Q=(-\frac{r}{\sqrt{n}}\,,\frac{r}{\sqrt{n}})^n$. By Lemma \ref{lem:10} we have
\begin{multline}
    \big|\big\{M_{\textbf{I}_n^k}(h\chi_{{}_{V_r}})>1\big\}\big|\geq \big|\big\{M_{\textbf{I}_n^k}(h\chi_{{}_Q})>1\big\}\big|\geq \\
    \geq ch(\ln h)^{k-1}|Q|\geq c_1h(\ln h)^{k-1}|V_r|, \label{eq:11-1}
\end{multline}
where $c_1>0$ depends only on $n$. On the other hand by virtue of the well-known estimation (see \cite[Chapter~II, \S~3]{3})
$$  \big|\big\{M_{\textbf{I}_n^k}(f)>\lambda\big\}\big|\leq
        c\int\limits_{\mathbb{R}^n} \frac{|f|}{\lambda}\,\Big(1+\ln\frac{|f|}{\lambda}\Big)^{k-1} \;\; (f\in L(\mathbb{R}^n), \;\;\lambda>0), $$
it follows that
\begin{equation}\label{eq:11-2}
    \big|\big\{M_{\textbf{I}_n^k}(h\chi_{{}_{V_r}})>1\big\}\big|\leq c_2h(\ln h)^{k-1}|V_r|,
\end{equation}
where $c_2>0$ depends only on $n$.

From \eqref{eq:11-1} and \eqref{eq:11-2} we conclude the validity of the lemma.
\end{proof}

From Theorems \ref{th:2} and \ref{th:3} on the basis of Lemma \ref{lem:11} we obtain the following result.

\begin{theorem}\label{th:4}
Let $2\leq k\leq n$. Then:
\begin{enumerate}
\item[$1)$] for every function $f\in L\setminus L(\ln^{+}L)^{k-1}(\mathbb{G}^n)$, there exists a measure preserving and invertible mapping $\omega:\mathbb{R}^n\to\mathbb{R}^n$ such that  $\{x:\omega(x)\neq x\}\subset \mathbb{G}^n$ and $|f|\circ\omega\in S_{\Delta(\bold{I}_n^k)}$;

\item[$2)$] for every Orlicz space $\psi(L)(\mathbb{G}^n)$ with the properties: $\psi$ satisfies $\Delta_2$-con\-di\-ti\-on at infinity and $\lim\limits_{h\to\infty} \frac{\psi(h)}{h(\ln h)^{k-1}}=0$, the set $\psi(L)(\mathbb{G}^n)\setminus S_{\Delta(\bold{I}_n^k)}$ is of the first category in $\psi(L)(\mathbb{G}^n)$.
\end{enumerate}
\end{theorem}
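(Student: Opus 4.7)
\medskip
\noindent\textbf{Proof proposal.}
The plan is to deduce both parts of Theorem \ref{th:4} as direct specializations of Theorems \ref{th:2} and \ref{th:3} to the basis $B=\bold{I}_n^k$, using the two-sided estimate of Lemma \ref{lem:11},
$$ c_1 h(\ln h)^{k-1}\leq \Phi_{\bold{I}_n^k}(h)\leq c_2 h(\ln h)^{k-1} \quad (h>1), $$
to translate hypotheses expressed in terms of $\Phi_B$ into hypotheses expressed in terms of the $L(\ln^+L)^{k-1}$ scale.

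First I would verify the abstract assumptions on $\Phi_{\bold{I}_n^k}$ required by Theorems \ref{th:2} and \ref{th:3}. Non-regularity is immediate from the lower bound, since $k\geq 2$ forces $\Phi_{\bold{I}_n^k}(h)/h \geq c_1(\ln h)^{k-1} \to \infty$. The $\Delta_2$-condition at infinity follows from the two-sided estimate together with the elementary $\Delta_2$-behavior of $h(\ln h)^{k-1}$: for large $h$ one has $\Phi_{\bold{I}_n^k}(2h)\leq c_2\cdot 2h(\ln h+\ln 2)^{k-1}\leq 2^k c_2\, h(\ln h)^{k-1}$, and then the lower bound converts this into a constant multiple of $\Phi_{\bold{I}_n^k}(h)$.

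Next I would match the Orlicz classes. Because $\Phi_{\bold{I}_n^k}$ is increasing and hence bounded on $(0,1]$, the portion of $\int \Phi_{\bold{I}_n^k}(|f|)$ over $\{0<|f|\leq 1\}$ is harmless; on $\{|f|>1\}$ the two-sided bound makes $\int \Phi_{\bold{I}_n^k}(|f|)$ comparable to $\int |f|(\ln|f|)^{k-1}$. Consequently $\Phi_{\bold{I}_n^k}(L)(\mathbb{G}^n) = L(\ln^+L)^{k-1}(\mathbb{G}^n)$, and the $\Delta_2$-refinement of Theorem \ref{th:2} yields part $1)$ verbatim. For part $2)$, the hypothesis $\psi(h)/(h(\ln h)^{k-1})\to 0$ together with the lower bound gives $\psi(h)/\Phi_{\bold{I}_n^k}(h)\to 0$, so the assumptions of Theorem \ref{th:3} are satisfied and its conclusion is precisely part $2)$.

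I do not anticipate a genuine obstacle: the whole argument is a routine translation, and the only care required is the Orlicz-class identification, which is immediate from Lemma \ref{lem:11} together with the behavior of $\Phi_{\bold{I}_n^k}$ near the origin. All the substantive work already sits in Theorems \ref{th:2}, \ref{th:3}, and Lemma \ref{lem:11}.
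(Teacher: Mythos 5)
Your proposal is correct and is exactly the paper's route: the paper deduces Theorem \ref{th:4} from Theorems \ref{th:2} and \ref{th:3} via the two-sided estimate of Lemma \ref{lem:11}, and your verification of non-regularity, the $\Delta_2$-condition, and the identification $\Phi_{\bold{I}_n^k}(L)(\mathbb{G}^n)=L(\ln^{+}L)^{k-1}(\mathbb{G}^n)$ merely spells out the routine details the paper leaves implicit.
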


\begin{remark}\label{rem:3}
The first part of Theorem \ref{th:1} was announced in \cite{11}.
\end{remark}

\section*{Acknowledgements}

The author was supported by Shota Rustaveli National Science Foundation (Project \#~31/48).

%\vskip+0.5cm

%\noindent \textbf{Authors' addresses:}

%\vskip+0.3cm

%Akaki Tsereteli State University, 59 Tamar Mepe St., Kutaisi 4600, Georgia.

%\textit{E-mail:} \texttt{oniani@atsu.edu.ge}

\end{document}